\definecolor{Red}{rgb}{1,0,0}
\definecolor{Blue}{rgb}{0,0,1}
\definecolor{Olive}{rgb}{0.41,0.55,0.13}
\definecolor{Green}{rgb}{0,1,0}
\definecolor{MGreen}{rgb}{0,0.8,0}
\definecolor{DGreen}{rgb}{0,0.55,0}
\definecolor{Yellow}{rgb}{1,1,0}
\definecolor{Cyan}{rgb}{0,1,1}
\definecolor{Magenta}{rgb}{1,0,1}
\definecolor{Orange}{rgb}{1,.5,0}
\definecolor{Violet}{rgb}{.5,0,.5}
\definecolor{Purple}{rgb}{.75,0,.25}
\definecolor{Brown}{rgb}{.75,.5,.25}
\definecolor{Grey}{rgb}{.5,.5,.5}
\definecolor{Black}{rgb}{0,0,0}
\def\path{{\tt path}}
\newcommand{\ecal}{\mathcal{E}}
\newcommand{\tcal}{\mathcal{T}}
\newcommand{\eps}{\varepsilon}
\newcommand{\ind}{\mathbbm{1}}
\newcommand{\bdm}{\begin{displaymath}}
\newcommand{\edm}{\end{displaymath}}
\newcommand{\bea}{\begin{eqnarray*}}
\newcommand{\eea}{\end{eqnarray*}}
\newcommand{\bean}{\begin{eqnarray}}
\newcommand{\eean}{\end{eqnarray}}
\newcommand{\expec}{\mathbb{E}}
\newcommand{\E}{\mathbb{E}}
\newtheorem{theorem}{Theorem}
\newtheorem{proposition}{Proposition}
\newtheorem{claim}{Claim}
\newtheorem{lemma}{Lemma}
\newtheorem{remark}{Remark}
\renewcommand{\P}{\mathbb{P}}
\newcommand{\Q}{\mathbb{Q}}
\begin{document}

\title{Distance-based species tree estimation
	under the coalescent: information-theoretic trade-off between number of loci and sequence length\thanks{
Keywords: phylogenetic reconstruction, multispecies coalescent, sequence length requirement.
S.R.~is supported by NSF grants DMS-1007144, DMS-1149312 (CAREER), and an Alfred P. Sloan Research Fellowship. 
E.M.~is supported by NSF grants DMS-1106999 and CCF 1320105
and DOD ONR grants  N000141110140 and N00014-14-1-0823 and grant
328025 from the Simons Foundation. E.M.~and S.R.~thank the Simons Institute for
the Theory of Computing at U.C. Berkeley where
this work was done.
}
}
\author{
{\bf Elchanan Mossel}\\
       {MIT}\\
       {\small \texttt{elmos@mit.edu}}
       {}\\
       {}\\
{\bf Sebastien Roch}\\
       {UW Madison}\\
       {\small \texttt{roch@math.wisc.edu}}
}
\maketitle
\thispagestyle{empty}

\begin{abstract}
We consider the reconstruction of a phylogeny
from multiple genes under the multispecies coalescent.
We establish a connection with the sparse signal detection
problem, where one seeks to distinguish between
a distribution and a mixture of the distribution 
and a sparse signal. Using this connection,
we derive an information-theoretic trade-off
between the number of genes, $m$, needed for an accurate
reconstruction and the sequence length, $k$, of the
genes. Specifically, we show that to detect
a branch of length $f$, one needs $m = \Theta(1/[f^{2} \sqrt{k}])$ genes.
\end{abstract}

\clearpage

\section{Introduction}

In the {\bf sparse signal detection problem}, 
one is given $m$ i.i.d.~samples $X_1, \ldots, X_m$
and the goal is 
to distinguish between a distribution $\P_0^{(m)}$
$$
H_0^{(m)}: X_i \sim \P_0^{(m)},
$$
and the same distribution corrupted by a 
sparse signal $\P_1^{(m)}$
$$
H_1^{(m)}: X_i \sim \Q^{(m)} := (1-\sigma_m)\,\P_0^{(m)} + \sigma_m\, \P_1^{(m)}.
$$
Typically one takes $\sigma_m = m^{-\beta}$,
where $\beta \in (0,1)$.
This problem arises
in a number of applications~\cite{Dobrushin:58,JeCaLi:10,CaJiTr:05,KHH+:05}. The Gaussian case in particular is well-studied~\cite{Ingster:97,DonohoJin:04,CaJeJi:11}.
For instance it is established in~\cite{Ingster:97,DonohoJin:04} that, in the case
$\P_0^{(m)} \sim N(0,1)$ and $\P_1^{(m)} \sim N(\lambda_m,1)$
with $\lambda_m = \sqrt{2 r \log m}$, a test
with vanishing error probability exists if and
only if $r$ exceeds an explicitly known
{\em detection boundary} $r^*(\beta)$.

In this paper, we establish a connection between
sparse signal detection and the reconstruction of phylogenies
from multiple genes or loci under the multispecies coalescent, a standard population-genetic
model~\cite{RannalaYang:03}. 
The latter problem is of
great practical interest
in computational evolutionary biology
and is currently the subject of intense study.
See e.g.~\cite{LYK+:09,DegnanRosenberg:09,ALP:+:12,Nakhleh:13} for surveys.
The problem
is also related to the reconstruction
of demographic history in population genetics~\cite{MyFePa:08,BhaskarSong:14,KMR+:15}.

By taking advantage of the connection
to sparse signal detection, 
we derive a ``detection boundary'' for the 
multilocus phylogeny problem
and use it to
characterize the trade-off between the
number of genes needed to accurately reconstruct a phylogeny
and the quality of the signal that can be extracted from each
separate gene. Our results apply to distance-based methods, an important class of
reconstruction methods.
Before stating our results more formally, we begin with some background. See e.g.~\cite{SempleSteel:03} for a more general introduction
to mathematical phylogenetics.

\paragraph{Species tree estimation} 
An evolutionary tree, or phylogeny, is a graphical
representation of the evolutionary relationships
between a group of species. Each leaf in the tree corresponds
to a current species while internal vertices
indicate past speciation events.  
In the classical phylogeny estimation problem,
one sequences a {\em single} common gene (or other {\em locus} such as pseudogenes, introns, etc.) from a representative individual of each species of interest. One then seeks to reconstruct the
phylogeny by comparing the genes across species. 
The basic principle is simple:
because mutations accumulate
over time during evolution, more distantly related species 
tend to exhibit more differences between
their genes. 

Formally, phylogeny estimation
boils down to {\em learning the structure of a
latent tree graphical model from i.i.d.~samples
at the leaves}.
Let $T = (V,E,L,r)$ be a rooted leaf-labelled
binary tree, with $n$ leaves denoted by $L = \{1,\ldots,n\}$
and a root denoted by $r$. 
In the Jukes-Cantor model~\cite{JukesCantor:69},
one of the simplest Markovian models of molecular
evolution, we associate to each edge $e \in E$ a mutation
probability 
\begin{equation}\label{eq:mutation-probability}
p_e = 1 - e^{-\nu_e t_e},
\end{equation}
where $\nu_e$ is the mutation rate and $t_e$
is the time elapsed along the edge $e$. 
(The analytical form of~\eqref{eq:mutation-probability}
derives from a continuous-time Markov process
of mutation along the edge. See e.g.~\cite{SempleSteel:03}.)
The {\em Jukes-Cantor process} is defined as follows:
\begin{itemize}
	\item Associate to the root a sequence $\mathbf{s}_r
	= (s_{r,1},\ldots,s_{r,k}) \in \{\mathtt{A}, \mathtt{C}, \mathtt{G}, \mathtt{T}\}^k$
	of length $k$ where each site $s_{r,i}$ is 
	uniform in $\{\mathtt{A}, \mathtt{C}, \mathtt{G}, \mathtt{T}\}$. 
	\item Let $U$ denote the set of children of the root. 
	\item Repeat until $U = \emptyset$: 
	\begin{itemize}
		\item Pick a $u \in U$.
		\item Let $u^-$ be the parent
		of $u$. 
		\item Associate a sequence $\mathbf{s}_u
		\in \{\mathtt{A}, \mathtt{C}, \mathtt{G}, \mathtt{T}\}^k$
		to $u$ as follows: $\mathbf{s}_u$ is obtained from $\mathbf{s}_{u^-}$
		by mutating each site in $\mathbf{s}_{u^-}$ independently
		with probability $p_{(u^-,u)}$; when a mutation occurs
		at a site $i$, replace $s_{u,i}$ with a uniformly
		chosen state in $\{\mathtt{A}, \mathtt{C}, \mathtt{G}, \mathtt{T}\}$.
		\item Remove $u$ from $U$ and add the children (if any) of
		$u$ to $U$.
	\end{itemize}
\end{itemize}
\noindent Let $T^{-r}$ be the tree $T$ where the root is
suppressed, i.e., where the two edges adjacent to the
root are combined into a single edge.
We let $\mathcal{L}[T,(p_e)_e,k]$ be the distribution
of the sequences at the {\em leaves} $\mathbf{s}_1,\ldots,\mathbf{s}_n$ under the Jukes-Cantor process.
We define the {\bf single-locus phylogeny estimation problem} as follows:
\begin{quote}
Given sequences 
at the leaves $(\mathbf{s}_1,\ldots,\mathbf{s}_n) \sim \mathcal{L}[T,(p_e)_e,k]$,
recover the (leaf-labelled) unrooted tree $T^{-r}$.
\end{quote}
(One may also be interested in estimating the
$p_e$s, but we focus on the tree. The root is in
general not identifiable.)
This problem has a long history in evolutionary biology. 
A large number of estimation techniques
have been developed. See e.g.~\cite{Felsenstein:04}.
For a survey of the learning perspective
on this problem, see e.g.~\cite{MSZ+:14}.
On the theoretical side, much is known
about the sequence length---or, in other words, the number of samples---required 
for a perfect reconstruction with high probability, including both information-theoretic lower bounds~\cite{SteelSzekely:03,Mossel:03,Mossel:04a,MoRoSl:11} and matching algorithmic upper bounds~\cite{ErStSzWa:99a,DaMoRo:11a,DaMoRo:11b,Roch:10}. More general models of molecular evolution
have also been considered in this context; see e.g.~\cite{ErStSzWa:99b,CrGoGo:02,MosselRoch:05,DaskalakisRoch:10,AnDaHaRo:10}.

Nowadays, it is common for biologists to have
access to {\em multiple} genes---or even full genomes. This abundance of data, which on the surface
may seem like a blessing, in fact comes with significant challenges. See e.g.~\cite{DeBrPh:05,Nakhleh:13} for surveys. One important issue is that
different genes
may have incompatible evolutionary histories---represented
by incongruent gene trees. In other words, if one
were to solve the phylogeny estimation problem {\em separately}
for several genes, one may in fact obtain {\em different}
trees. Such incongruence can be explained in some
cases by estimation error, but it can also
result from deeper biological processes such as horizontal
gene transfer, gene duplications and losses, and
incomplete lineage sorting~\cite{Maddison:97}. The latter
phenomenon, which will be explained in Section~\ref{section:definitions},
is the focus of this paper. 

Accounting for this type of complication necessitates 
a {\em two-level hierarchical model} for the input data. Let $S = (V,E, L, r)$ be a rooted
leaf-labelled binary
{\em species tree}, i.e., a tree representing
the actual succession of past divergences for
a group of organisms. To each gene $j$ shared by all
species under consideration, we associate
a {\em gene tree} $T_j = (V_j,E_j,L)$,
mutation probabilities $(p^j_e)_{e \in E_j}$,
and sequence length $k_j$.
The triple $(T_j,(p^j_e)_{e \in E_j},k_j)$ is
picked at random according to a given distribution
$\mathcal{G}[S,(\nu_e,t_e)_{e \in E}]$
which depends on the species tree, mutation
parameters $\nu_e$ and inter-speciation times
$t_e$.  
It is standard 
to assume that the gene trees are 
conditionally independent given the species tree.
In the context of incomplete lineage sorting,
the distribution of the gene trees,  $\mathcal{G}$,
is given by the so-called 
{\em multispecies coalescent}, 
which is a canonical model
for combining speciation history and population genetic effects~\cite{RannalaYang:03}. 
(Readers familiar with the multispecies coalescent may observe that our model is a bit richer than the standard model, as it includes mutational parameters in addition to branch length information. Note that we also incorporate sequence length in the model.) 
The
detailed description of the model is deferred to Section~\ref{section:definitions}, 
as it is not needed for
a high-level overview of our results. 
For the readers not familiar
with population genetics, it is useful to think of $T_j$ as a noisy
version of $S$ (which, in particular, may result in $T_j$ having a different (leaf-labelled) topology than $S$).


Our two-level model of sequence data is then as follows.
Given a species tree $S$, parameters $(\nu_e,t_e)_{e\in E}$ and a number of genes $m$:
\begin{enumerate}
	\item {\bf [First level: gene trees]} Pick $m$ independent gene trees and parameters
	\begin{displaymath}
	(T_j,(p^j_e)_{e \in E_j},k_j) \sim \mathcal{G}[S,(\nu_e,t_e)_{e \in E}], \qquad j =1,\ldots, m. 
	\end{displaymath}

	\item {\bf [Second level: leaf sequences]} For each gene $j = 1,\ldots,m$, generate
	sequence data at the leaves $L$ according to the
	(single-locus) Jukes-Cantor process, as described above,
	\begin{displaymath}
	(\mathbf{s}_1^j,\ldots,\mathbf{s}_n^j) \sim \mathcal{L}[T_j,(p^j_e)_e,k_j], \qquad j = 1,\ldots,m,
	\end{displaymath}
	independently of the other genes.
\end{enumerate}
We define the {\bf multi-locus phylogeny estimation problem} as follows:
\begin{quote}
	Given sequences 
	at the leaves $(\mathbf{s}^j_1,\ldots,\mathbf{s}^j_n)$, $j=1,\ldots,m$, generated by the process above,
	recover the (leaf-labelled) unrooted species tree $S^{-r}$.
\end{quote}
In the context of incomplete lineage sorting,
this problem is the focus of very active
research in statistical phylogenetics~\cite{LYK+:09,DegnanRosenberg:09,ALP:+:12,Nakhleh:13}.
In particular,
there is a number of theoretical results, including~\cite{DegnanRosenberg:06,DeDGiBr+:09,DeGiorgioDegnan:10,MosselRoch:10a,LiYuPe:10,AlDeRh:11,Roch:13,DaNoRo:15,RochSteel:14,RochWarnow:15}. However, many of these results 
concern the statistical properties (identifiability,
consistency, convergence rate) of 
species tree estimators {\em that (unrealistically) assume perfect
knowledge of the $T_j$s.} A very incomplete
picture is available concerning the properties of estimators based
on sequence data, i.e., that 
do {\em not} require the knowledge of the $T_j$s.
(See below for an overview of prior results.)

Here we consider the data requirement of such
estimators based on sequences.
To simplify,
we assume that all genes have the same length, i.e., that $k_j = k$ for all $j=1,\ldots,m$ for some $k$.
(Because our goal is to derive a lower bound,
such simplification is largely immaterial.)
Our results apply to an important class
of methods known as {\em distance-based methods},
which we briefly describe now.
In the single-locus phylogeny estimation
problem, a natural way to infer $T^{-r}$ is
to use the fraction of substitutions between
each pair, i.e., letting $\|\cdot\|_1$ denote the
$\ell_1$-distance,
\begin{equation}\label{eq:theta}
\theta(\mathbf{s}_{a},\mathbf{s}_{b})
:= \|\mathbf{s}_{a} -\mathbf{s}_{b}\|_1,
\qquad \forall a,b \in [n].
\end{equation}
We refer to reconstruction methods relying solely on
the $\theta(\mathbf{s}_{a},\mathbf{s}_{b})$s as distance-based methods.
Assume for instance that $\nu_e = \nu$ for all
$e$, i.e., the so-called molecular clock
hypothesis. Then it is easily seen that single-linkage clustering (e.g.,~\cite{HaTiFr:09}) applied 
to 
the {\em distance matrix}
$(\theta(\mathbf{s}_{a},\mathbf{s}_{b}))_{a,b\in [n]}$
converges to $T^{-r}$ 
as $k \to +\infty$. (In this special case,
the root can be recovered as well.) 
In fact, $T$ can be reconstructed perfectly as long as,
for each $a$, $b$, $\frac{1}{k}\theta(\mathbf{s}_{a},\mathbf{s}_{b})$
is close enough to its expectation (e.g.~\cite{SempleSteel:03}) 
\begin{displaymath}
\theta_{a,b} := k^{-1}\expec[\theta(\mathbf{s}_{a},\mathbf{s}_{b})] = \frac{3}{4}(1 - e^{-d_{ab}})
\quad \text{with} \quad
d_{ab} := \sum_{e \in P(a,b)} \nu_e t_e,
\end{displaymath}
where $P(a,b)$ is the edge set on the unique path
between $a$ and $b$ in $T$. Here ``close enough'' means
$O(f)$ where $f := \min_e \nu_e t_e$. This observation
can been
extended to general $\nu_e$s. 
See e.g.~\cite{ErStSzWa:99a} for explicit bounds
on the sequence length required for perfect reconstruction
with high probability.

Finally, to study distance-based methods in the {\em multi-locus} case, we restrict ourselves to
the following {\bf multi-locus distance estimation problem}:
\begin{quote}
	Given an accuracy $\eps > 0$ and distance matrices $\theta(\mathbf{s}_a^j,\mathbf{s}_b^j)_{a,b\in [n]}$, $j=1,\ldots,m$, estimate $d_{ab}$ as defined above
	within $\eps$ for all $a,b$.
\end{quote}
Observe that,
once the $d_{ab}$s are estimated within sufficient
accuracy, i.e., within $O(f)$,
the species tree can be reconstructed 
using the techniques referred to in the single-locus
case.

\paragraph{Our results}
\begin{figure}
	\centering
	\includegraphics[width = 0.6\textwidth]{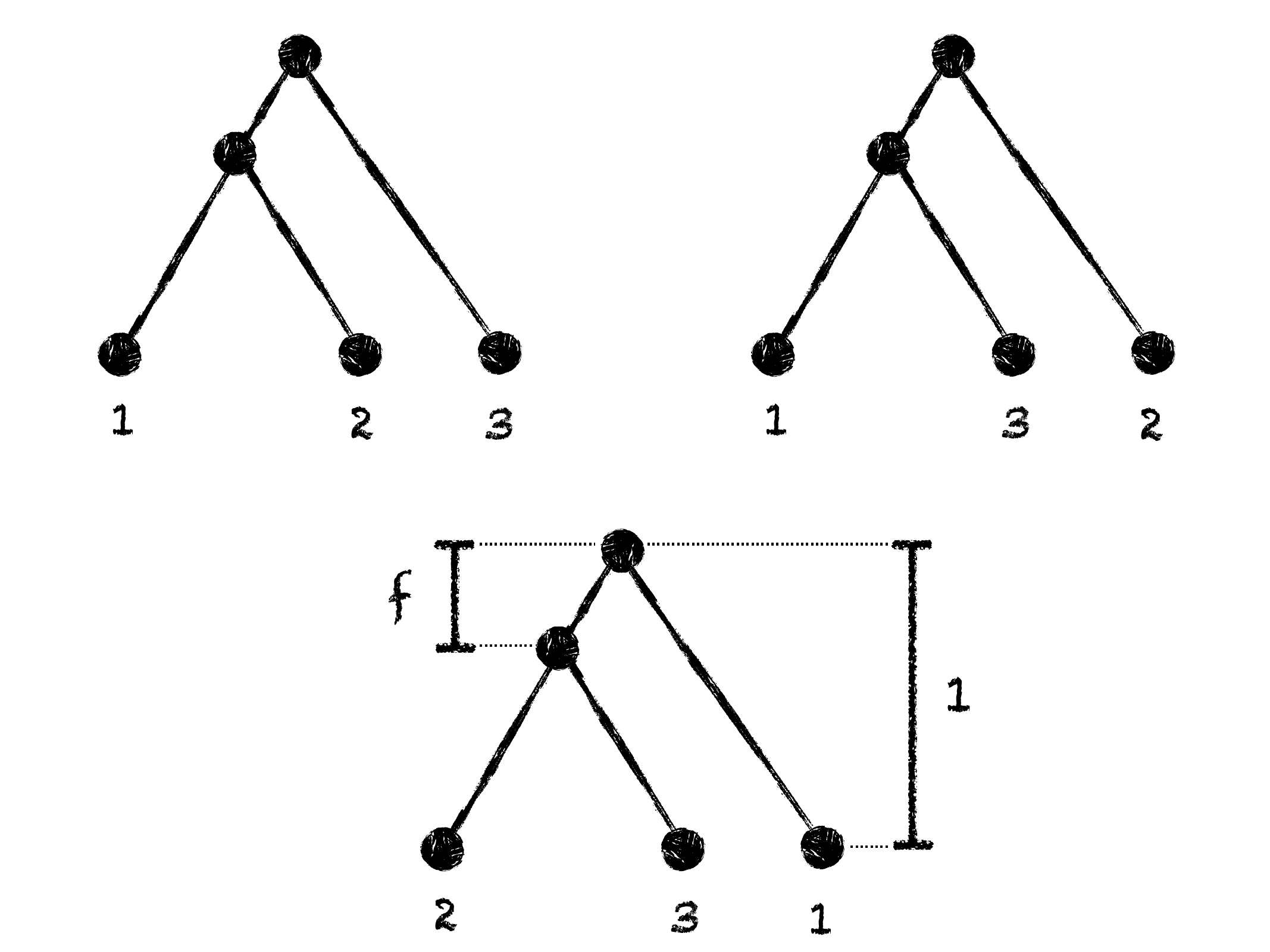}
	\caption{Three species trees.}\label{fig:3-trees}
\end{figure}
How is this related to the sparse signal detection
problem? Our main goal is to provide a lower bound
on the amount of data required for perfect reconstruction,
in terms of $m$ (the number of genes) and $k$ (the sequence length). 
Consider the three possible (rooted, leaf-labelled)
species trees with three leaves, as depicted in 
Figure~\ref{fig:3-trees}, where we let
the time to the most recent divergence be $1 - f$
(from today) and the time to the earlier divergence be $1$. 
Thus $f$ is the time between the two divergence events. 
In order for a distance-based method to distinguish
between these three possibilities, i,e., to determine
which pair is closest,
we need to estimate the $d_{ab}$s within
$O(f)$ accuracy. Put differently, within
the multi-locus distance estimation problem, 
it suffices to establish a lower bound
on the data required
to distinguish between a two-leaf species tree
$S$ with $d_{12} = 2$ and a two-leaf
species tree $S^+$ with $d_{12} = 2 - 2f$,
where in both cases $\nu_e = 1$ for all
$e$. 
We are interested in the limit
$f \to 0$.

Let $\P_0$ and $\Q$ be the distributions of
$\theta(\mathbf{s}^1_1, \mathbf{s}^1_2)$ for a single gene
under $S$
and $S^+$ respectively, where for ease of notation the dependence on
$k$ is implicit. For $m$ genes, we denote the
corresponding distributions by $\P_0^{\otimes m}$
and $\Q^{\otimes m}$.
To connect the problem to sparse signal detection
we observe below that, under the multispecies
coalescent, $\Q$ is in fact a {\em mixture} of
$\P_0$ and a sparse signal $\P_1$, i.e.,
\begin{equation}\label{eq:mixture}
\Q
= (1- \sigma_f)\,\P_0 + \sigma_f\,\P_1,
\end{equation}
where $\sigma_f = O(f)$ as $f \to 0$.

When testing between 
$\P_0^{\otimes m}$ and $\Q^{\otimes m}$, the optimal sum of Type-I (false positive) and Type-II (false negative)
errors is given by (see, e.g.,~\cite{CoverThomas:91})
\begin{equation}\label{eq:testing-error}
\inf_A \{\P_0^{\otimes m}(A) + \Q^{\otimes m}(A^c)\}
= 1 - \|\P_0^{\otimes m} - \Q^{\otimes m}\|_{\mathrm{TV}},
\end{equation}
where $\|\cdot\|_{\mathrm{TV}}$ denotes the total
variation distance. 
Because $\sigma_f = O(f)$, for any $k$,
in order to distinguish between $\P_0$ and $\Q$
one requires that, at the very least, $m = \Omega(f^{-1})$. 
Otherwise the probability of observing
a sample originating from $\P_1$ under $\Q$ is bounded away from $1$. In~\cite{MosselRoch:10a} it was shown
that, provided that $k = \Omega(f^{-2} \log f^{-1})$,
$m = \Omega(f^{-1})$ suffices.
At the other end of the spectrum, when $k = O(1)$,
a lower bound for the single-locus problem obtained by~\cite{SteelSzekely:03}
implies that $m = \Omega(f^{-2})$ is needed. An algorithm achieving this bound under the multispecies coalescent was recently
given in~\cite{DaNoRo:15}.

We settle the full spectrum
between these two regimes.
Our results apply when $k = f^{-2 + 2\kappa}$ and $m = f^{-1 - \mu}$
where $0 < \kappa, \mu < 1$ as $f \to 0$.
\begin{theorem}[Lower bound]\label{thm:main1}
	For any $\delta > 0$,
	there is a $c > 0$ such that
	$$
	\|\P_0^{\otimes m} - \Q^{\otimes m}\|_{\mathrm{TV}}
	\leq \delta,
	$$
	whenever 
	$$
	m \leq c \frac{1}{f^2 \sqrt{k}}.
	$$
\end{theorem}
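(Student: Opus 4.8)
The plan is to bound the total variation distance by the $\chi^2$-divergence and exploit its tensorization over the $m$ independent genes. Recall that for any pair $\mu \ll \nu$ one has $\|\mu - \nu\|_{\mathrm{TV}} \le \tfrac{1}{2}\sqrt{\chi^2(\mu\,\|\,\nu)}$, where $\chi^2(\mu\,\|\,\nu) = \int (\mathrm{d}\mu/\mathrm{d}\nu - 1)^2\,\mathrm{d}\nu$, and that for product measures $1 + \chi^2(\mu^{\otimes m}\,\|\,\nu^{\otimes m}) = (1 + \chi^2(\mu\,\|\,\nu))^m$. Applying this with $\mu = \Q$, $\nu = \P_0$ reduces the theorem to a bound on the single-gene divergence $\chi^2(\Q\,\|\,\P_0)$.

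The mixture structure \eqref{eq:mixture} makes this divergence transparent. Since $\mathrm{d}\Q/\mathrm{d}\P_0 - 1 = \sigma_f(\mathrm{d}\P_1/\mathrm{d}\P_0 - 1)$, we obtain the exact identity $\chi^2(\Q\,\|\,\P_0) = \sigma_f^2\,\chi^2(\P_1\,\|\,\P_0)$ (here $\P_1 \ll \P_0$, as both are supported on all of $\{0,\dots,k\}$). Combining the three displays and using $\sigma_f = O(f)$,
\begin{equation*}
\|\P_0^{\otimes m} - \Q^{\otimes m}\|_{\mathrm{TV}}
\le \tfrac{1}{2}\sqrt{\big(1 + \sigma_f^2\,\chi^2(\P_1\,\|\,\P_0)\big)^m - 1}
\le \tfrac{1}{2}\sqrt{\exp\!\big(m\,\sigma_f^2\,\chi^2(\P_1\,\|\,\P_0)\big) - 1}.
\end{equation*}
Thus it suffices to prove the key estimate $\chi^2(\P_1\,\|\,\P_0) = O(\sqrt{k})$: given this, $m \le c/(f^2\sqrt{k})$ forces the exponent $m\,\sigma_f^2\,\chi^2(\P_1\,\|\,\P_0)$ to be $O(c)$, and choosing $c$ small enough makes the right-hand side at most $\delta$.

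Everything therefore reduces to the single-gene bound, which is the heart of the argument. I would use the coalescent description of the two laws: both $\P_0$ and $\P_1$ are mixtures of binomials $\mathrm{Bin}(k, q(2t))$, with $q(2t) = \tfrac{3}{4}(1 - e^{-2t})$, where the coalescence time $t$ is drawn from $1 + \mathrm{Exp}(1)$ under $\P_0$ and from the narrow window $[1-f,1)$ under $\P_1$. Because $fk = o(\sqrt{k})$ in the regime $k = f^{-2+2\kappa}$, the signal $\P_1$ equals, up to a negligible shift, the single binomial $\mathrm{Bin}(k, q_*)$ with $q_* = q(2)$, and $q_*$ sits exactly at the lower edge of the support of the mixing measure of $\P_0$. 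Writing $\theta = kq_* + x\sqrt{k}$ and invoking a local central limit theorem, $\P_1(\theta)$ behaves like a Gaussian of width $\Theta(\sqrt k)$ centered at $kq_*$, while $\P_0(\theta) \asymp \Phi(x/\sigma_*)/k$, since only the part of the mixture with $q \ge q_*$ contributes at the edge. Substituting into $\chi^2(\P_1\,\|\,\P_0) = \sum_\theta \P_1(\theta)^2/\P_0(\theta) - 1$ and passing to an integral in $x$ gives a leading term $\Theta(\sqrt k)\int e^{-x^2/\sigma_*^2}/\Phi(x/\sigma_*)\,\mathrm{d}x$; the integral converges because the Gaussian numerator decays faster than the Mills-ratio lower tail of $\Phi$, yielding the desired $O(\sqrt{k})$.

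The main obstacle is the rigorous control of $\P_0$ in its lower tail, i.e. a uniform lower bound $\P_0(\theta) \gtrsim \Phi(x/\sigma_*)/k$ valid even for $\theta$ a few standard deviations below $kq_*$, where $\P_1$ still carries non-negligible mass but $\P_0$ is only a tail. This is precisely where the edge-of-support geometry enters. I would establish it by isolating the contribution of coalescence times $t$ in an $O(1/\sqrt{k})$ neighborhood of $1$, combining the positive density of $1 + \mathrm{Exp}(1)$ at its left endpoint with uniform Stirling/local-CLT estimates on $\mathrm{Bin}(k, q(2t))$ across that neighborhood. Controlling the error from replacing $\P_1$ by $\mathrm{Bin}(k, q_*)$, and checking that the discarded ranges of $\theta$ contribute only lower-order terms to the $\chi^2$ sum, are the remaining routine but delicate steps.
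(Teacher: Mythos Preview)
Your approach is correct and closely parallel to the paper's, but with a genuine organizational difference worth noting. The paper works with the Hellinger distance and proves $H^2(\P_0,\Q)=O(f^2\sqrt{k})$; it does \emph{not} use the clean factorization you exploit, but instead writes
\[
H^2(\P_0,\Q)=\sum_j h_{\sigma_f}\!\left(\frac{\P_1(\theta=j)}{\P_0(\theta=j)}\right)\P_0(\theta=j),
\qquad h_b(s)=(\sqrt{1+b(s-1)}-1)^2,
\]
and bounds $h_b(s)\le \min\{bs,(bs)^2\}$. The quadratic branch is exactly your $\sigma_f^2(\P_1/\P_0)^2$, so on the bulk of the range the two arguments coincide. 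In the far-left tail ($j/k\le p_0-C\sqrt{\log k/k}$) the paper switches to the linear branch $h_b(s)\le bs$, which spares it from needing $\chi^2(\P_1\|\P_0)<\infty$; your route must instead control the full quadratic sum there. This does go through: with the paper's bounds $\P_1/\P_0\lesssim\sqrt{k}\,e^{C_1 2^\ell}$ and $\P_0[\theta/k\in I'_\ell]\lesssim e^{-2^{2\ell-1}}/\sqrt{k}$ on the dyadic shells $I'_\ell$, the squared ratio times the integrator is $\sqrt{k}\,\exp(2C_1 2^\ell-2^{2\ell-1})$, which is summable in $\ell$. So $\chi^2(\P_1\|\P_0)=O(\sqrt{k})$ is indeed true, and your Mills-ratio heuristic is the right picture for why.

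What each buys: your $\chi^2$ route gives the exact identity $\chi^2(\Q\|\P_0)=\sigma_f^2\,\chi^2(\P_1\|\P_0)$, which is cleaner than carrying $h_{\sigma_f}$ through a case analysis and makes the role of the sparsity $\sigma_f$ completely transparent. The paper's $H^2$ route is more robust in principle---the $\min$ bound caps the integrand at $\sigma_f s$, so one never needs the second moment of the likelihood ratio to be finite---but that robustness is not actually needed here. The hard technical content (the lower bound on $\P_0$ near and below the edge $p_0$, obtained by restricting $X$ to a window of width $1/\sqrt{k}$ above $p_0$, and the tail bound on $\P_0[\theta/k\in I'_\ell]$) is identical in both proofs; you have correctly identified it as the crux and outlined the right way to get it.

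One small caveat: your asymptotic $\P_0(\theta)\asymp\Phi(x/\sigma_*)/k$ is accurate only for $x=O(1)$; for $x\gg 1$ (well above $kq_*$) one has $\P_0(\theta)\asymp 1/\sqrt{k}$, not $1/k$. This does not affect the argument since $\P_1$ is negligible there, but when you write it up you should state it as a lower bound rather than a two-sided asymptotic.
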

Notice that the lower bound on $m$ interpolates between the
two extreme regi\-mes discussed above.
As $k$ increases, a more accurate estimate of
the gene trees can be obtained and one expects
that the number of genes required for perfect reconstruction should indeed 
decrease. The form of that dependence is far from
clear however.
We in fact prove that our analysis is tight.
\begin{theorem}[Matching upper bound]\label{thm:main2}
	For any $\delta > 0$,
	there is a $c' > 0$ such that
	$$
	\|\P_0^{\otimes m} - \Q^{\otimes m}\|_{\mathrm{TV}}
	\geq 1 - \delta,
	$$
	whenever 
	$$
	m \geq c' \frac{1}{f^2 \sqrt{k}}.
	$$
	Moreover, there is an efficient
		test to distinguish between $\P_0^{\otimes m}$
		and $\Q^{\otimes m}$ in that case.
\end{theorem}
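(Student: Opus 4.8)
The plan is to exhibit an explicit test whose Type-I and Type-II errors both tend to $0$ in the stated regime; by \eqref{eq:testing-error} this immediately forces $\|\P_0^{\otimes m}-\Q^{\otimes m}\|_{\mathrm{TV}}\ge 1-\delta$. The first step is to make the mixture \eqref{eq:mixture} explicit at the level of gene-distances. Write $\tau_j$ for the (coalescent-determined) pairwise distance of gene $j$ and $g(\tau)=\tfrac34(1-e^{-\tau})$ for the associated per-site disagreement probability. Under the coalescent the law of $\tau$ under $\P_0$ is a shifted exponential supported on $[\tau_{\min},\infty)$, while under $S^+$ it is the same shifted exponential translated downward by $\Theta(f)$. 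A direct density comparison shows that, for $\sigma_f=\Theta(f)$, the signal component $\P_1$ may be taken supported entirely on the short interval $[\tau_{\min}-2f,\tau_{\min})$, i.e.\ just below the lower edge of the support of $\P_0$. Conditionally on $\tau_j$ the observed statistic is $X_j:=\theta(\mathbf{s}^j_1,\mathbf{s}^j_2)\sim\mathrm{Bin}(k,g(\tau_j))$. Since $k=f^{-2+2\kappa}$ gives $f\sqrt k=f^{\kappa}\to0$, the width $\Theta(f)$ of $\P_1$ in the $g$-variable is negligible against the binomial resolution $1/\sqrt k$: from the data's point of view $\P_1$ is effectively a point mass at $p^\star:=g(\tau_{\min})$ sitting just beneath the left edge of the (smooth, positive-density) law of $g(\tau)$ under $\P_0$.

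This geometry suggests detecting the signal as an \emph{excess of small $X_j$}. Concretely I would fix a constant $\alpha>0$, set the threshold $t:=kp^\star-\alpha\sqrt k$, and use the count
\begin{equation*}
N:=\#\{\,j\in[m]:X_j\le t\,\},
\end{equation*}
rejecting $\P_0^{\otimes m}$ (i.e.\ declaring $\Q^{\otimes m}$) when $N$ is large. The two quantities to control are the per-gene probabilities $\P_0(X\le t)$ and $\P_1(X\le t)$. For the signal, a gene with $g(\tau_j)\le p^\star$ has $\mathrm{Bin}(k,g(\tau_j))$-mean at most $kp^\star$, and since $k(p^\star-g(\tau_j))=\Theta(kf)=o(\sqrt k)$, a central-limit estimate gives $\P_1(X\le t)\to\Phi(-\alpha)=\Theta(1)$. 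For the background I claim $\P_0(X\le t)=\Theta(1/\sqrt k)$: writing $g(\tau_j)=p^\star+\delta$ with $\delta\ge0$ having a density that is bounded and bounded away from zero near $0$, the event $\{X_j\le t\}$ requires a downward deviation of $\alpha\sqrt k+k\delta$, which is exponentially unlikely unless $\delta=O(1/\sqrt k)$; integrating the Gaussian lower-tail $\Phi(-(\alpha+c\sqrt k\,\delta))$ against the essentially constant density of $\delta$ over $[0,\infty)$ produces the claimed $\Theta(1/\sqrt k)$.

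Granting these two estimates, the means separate cleanly: $\E_{H_0}[N]=m\,\P_0(X\le t)=\Theta(m/\sqrt k)$, while $\E_{H_1}[N]=\E_{H_0}[N]+m\sigma_f\big(\P_1(X\le t)-\P_0(X\le t)\big)=\E_{H_0}[N]+\Theta(fm)$. Because the genes are i.i.d.\ under each hypothesis, $N$ is a sum of $m$ independent indicators, so $\mathrm{Var}(N)\le\E[N]$ under both $H_0$ and $H_1$; and since $f\sqrt k\to0$ forces $fm\ll m/\sqrt k$, both variances are $\Theta(m/\sqrt k)$. Placing the decision threshold halfway between the two means and applying Chebyshev's inequality bounds each error by a constant times $\mathrm{Var}(N)/(fm)^2=\Theta\!\big(1/(f^2 m\sqrt k)\big)$. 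Hence when $m\ge c'/(f^2\sqrt k)$ with $c'$ chosen large, both errors fall below $\delta$, yielding the total-variation bound; and since the test merely computes the $m$ Hamming distances $X_j$ and thresholds a single count, it runs in time $O(mk)$, establishing the efficiency claim.

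The hard part will be the background estimate $\P_0(X\le t)=\Theta(1/\sqrt k)$, which is precisely the source of the $\sqrt k$ gain over the naive $m=\Theta(1/f^2)$ sum-test. Making it rigorous requires an integral central-limit control of the lower binomial tail that is uniform as $\delta$ ranges over the coalescent density and as $k\to\infty$, together with a check that the $\Theta(f)$ spread of $\P_1$ and the $o(\sqrt k)$ offset of its mean below $kp^\star$ do not disturb the leading constants. Once this local estimate is in place, the signal computation, the i.i.d.\ variance bound, and the Chebyshev step are routine.
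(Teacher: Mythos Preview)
Your proposal is correct and follows essentially the same route as the paper: count the genes whose normalized Hamming distance falls at (or just below) the left edge $p_0$ of the support of $X$ under $\P_0$, use the $\Theta(1/\sqrt{k})$ background-probability estimate together with the $\Theta(1)$ signal probability to get a mean gap of $\Theta(fm)$ against variance $\Theta(m/\sqrt{k})$, and finish with Chebyshev. The paper places the threshold at $p_0$ rather than $p_0-\alpha/\sqrt{k}$ and invokes the integrator bound from the lower-bound section for the $O(1/\sqrt{k})$ estimate instead of redoing the tail integral, but these are cosmetic differences; your normalization in the CLT step should read $\Phi\!\big(-\alpha/\sqrt{p^\star(1-p^\star)}\big)$ rather than $\Phi(-\alpha)$, which does not affect the $\Theta(1)$ conclusion.
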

Our proof of the upper bound actually
gives an efficient reconstruction algorithm under the
molecular clock hypothesis. We expect that the insights
obtained from proving Theorem~\ref{thm:main1}
and~\ref{thm:main2} will lead to more accurate
practical methods as well in the general case.

Our results were announced without proof
in abstract form in~\cite{MosselRoch:u}.

\paragraph{Proof sketch} 
Let $Z$ be an exponential
random variable with mean $1$. We first show that, under $\P_0$
(respectively $\Q$),
$\theta(\mathbf{s}^1_1, \mathbf{s}^1_2)$ is binomial
with $k$ trials and success probability 
$\frac{3}{4}\left(1 - e^{-2(\zeta + Z)}\right)$,
where $\zeta = 1$ (respectively $\zeta = 1 - f$).
Equation~\eqref{eq:mixture} then follows
from the memoryless property of the exponential,
where $\sigma_f$ is the probability that 
$Z \leq f$. 

A recent result of~\cite{CaiWu:14} gives a
formula for the detection boundary of the
sparse signal detection problem for general
$\P_0$, $\P_1$. However, applying this formula
here is non-trivial. Instead we 
bound directly the total variation distance between
$\P_0^{\otimes m}$ and $\Q^{\otimes m}$.
Similarly to the approach used in~\cite{CaiWu:14},
we work with the Hellinger distance $H^2(\P_0^{\otimes m},\Q^{\otimes m})$
which tensorizes as follows (see e.g.~\cite{CoverThomas:91}) 
\begin{equation}\label{eq:tensorize}
\frac{1}{2}H^2(\P_0^{\otimes m},\Q^{\otimes m})
= 1 - \left(
1 - \frac{1}{2}H^2(\P_0,\Q)\right)^m,
\end{equation}
and further satisfies 
\begin{equation}\label{eq:tv-h2}
\|\P_0^{\otimes m} - \Q^{\otimes m}\|^2_{\mathrm{TV}}
\leq H^2(\P_0^{\otimes m},\Q^{\otimes m})
\left[1 - \frac{1}{4}H^2(\P_0^{\otimes m},\Q^{\otimes m})\right].
\end{equation}
All the work is in proving that,
as $f \to 0$,
$$
H^2(\P_0,\Q) = O\left(f^{2} \sqrt{k}\right).
$$
The details are in Section~\ref{section:lower}.

The proof of Theorem~\ref{thm:main2}
on the other hand
involves the construction of a statistical
test that distinguishes between $\P_0^{\otimes m}$
and $\Q^{\otimes m}$. In the regime $k = O(1)$,
an optimal test (up to constants) compares the means of the samples~\cite{DaNoRo:15}. See also~\cite{Liu+:09} 
for a related method (without sample complexity).
In the regime $k = \omega(f^{-2})$, an optimal test (up to constants) compares the minima
of the samples~\cite{MosselRoch:10a}. A natural
way to interpolate between these two tests is 
to consider an appropriate quantile. We show
that a quantile of order $1/\sqrt{k}$ leads to the
optimal choice.

\paragraph{Organization.} 
The gene tree generating model is defined in Section~\ref{section:definitions}. 
The proof of Theorem~\ref{thm:main1}
can be found in Section~\ref{section:lower}.
The proof of Theorem~\ref{thm:main2}
can be found in Section~\ref{section:upper}.

\section{Further definitions}
\label{section:definitions}

In this section, we give more details on
the model.

\paragraph{Some coalescent theory}
\begin{figure}
	\centering
	\includegraphics[width = 0.5\textwidth]{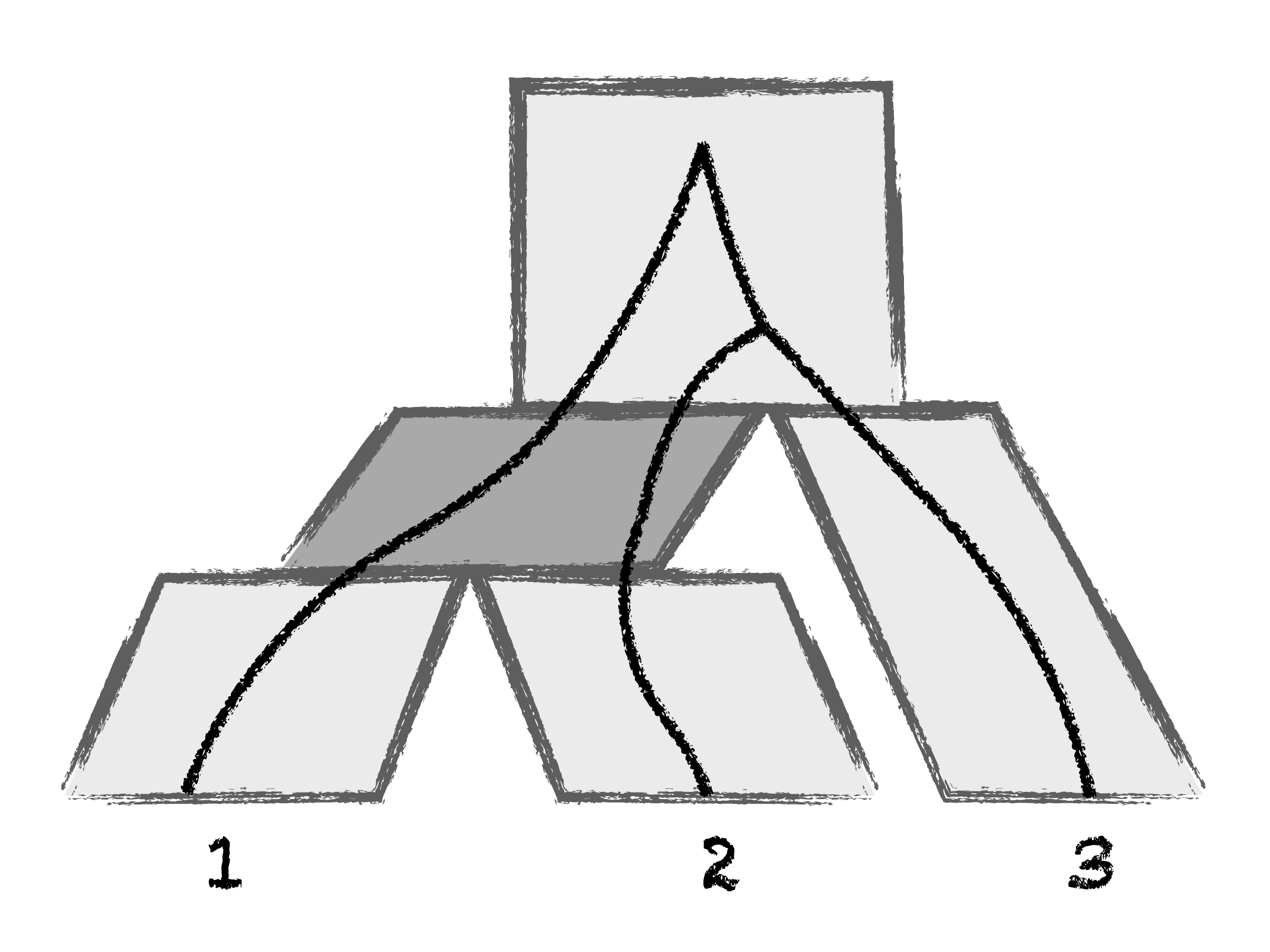}
	\caption{An incomplete lineage sorting event. Although $1$ and $2$ are more closely related in the species tree (fat tree), $2$ and $3$ are more closely related in the gene tree (thin tree). This incongruence is caused by the failure of the lineages originating
		from $1$ and $2$ to coalesce within the shaded branch.}\label{fig:3-species}
\end{figure}
As we mentioned in the previous section,
our gene tree distribution model $\mathcal{G}[S,(\nu_e,t_e)_{e \in E}]$
is the {\em multispecies coalescent}~\cite{RannalaYang:03}.
We first explain the model in the two-species case.
Let $1$ and $2$ be two species and consider a common gene
$j$. One can trace {\em back in time} 
the lineages of gene $j$
from an individual in $1$ and from
an individual in $2$ until the first {\em common}
ancestor. The latter event is called a {\em coalescence}.
Here, because the two lineages originate from different
species, coalescence occurs in an ancestral population.
Let $\tau$ be the time of the divergence between
$1$ and $2$ (back in time). Then, under the 
multispecies coalescent, the {\em coalescence time}
is $\tau + Z$ where $Z$ is an exponential random variable
whose mean depends on the effective population
size of the ancestral population. Here we scale
time so that the mean is $1$. (See, e.g.,~\cite{Durrett:08} for an introduction to coalescent theory.)

We  get for the two-level model of sequence data:
\begin{lemma}[Distance distribution]
Let $S$ be a two-leaf species tree with $d_{12} = 2\tau$
and $\nu_e = 1$ for all $e$ and
let $\theta(\mathbf{s}^1_1,\mathbf{s}^1_2)$ be as
in~\eqref{eq:theta} for some $k$. Then the (random) distribution of
$\theta(\mathbf{s}^1_1,\mathbf{s}^1_2)$ is binomial
with $k$ trials and success probability 
$\frac{3}{4}\left(1 - e^{-2(\tau + Z)}\right)$.
\end{lemma}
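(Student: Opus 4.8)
The plan is to condition on the coalescence time and thereby reduce the claim to a single-site Jukes--Cantor computation. First I would invoke the two-species multispecies coalescent described above: tracing the two lineages back in time, they coalesce at time $\tau + Z$ with $Z$ exponential of mean $1$, so that conditionally on $Z$ the gene tree $T_1$ is the \emph{deterministic} two-leaf tree whose two edges, running from the most recent common ancestor down to leaf $1$ and to leaf $2$, each have time-length $\tau + Z$. Since $\nu_e = 1$, each such edge carries integrated rate $\tau + Z$ and hence, by~\eqref{eq:mutation-probability}, Jukes--Cantor mutation probability $p_e = 1 - e^{-(\tau + Z)}$.

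The core of the argument is computing, conditionally on $Z$, the probability that a single site disagrees between the two leaves. I would condition further on the (uniform) ancestral state at the most recent common ancestor; given that state, the two leaf states are independent, and the per-edge transition law arising from the mutation rule is $\P[a \to a] = \tfrac14 + \tfrac34 e^{-(\tau+Z)}$ and $\P[a \to b] = \tfrac14\bigl(1 - e^{-(\tau+Z)}\bigr)$ for each $b \neq a$. Summing the probability that both leaves land in the same value over the four possible common states and subtracting from $1$ gives, after a short simplification, a per-site disagreement probability of exactly $\tfrac34\bigl(1 - e^{-2(\tau+Z)}\bigr)$. Equivalently, one can observe that passing through the uniform (stationary) root is the same as running a single reversible Jukes--Cantor path of total integrated rate $2(\tau+Z)$, whose endpoint-disagreement probability is $\tfrac34\bigl(1 - e^{-2(\tau+Z)}\bigr)$ directly from the semigroup; this matches the formula $\theta_{ab} = \tfrac34(1 - e^{-d_{ab}})$ with $d_{ab} = 2(\tau + Z)$.

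Finally I would assemble the two levels of randomness. Under the Jukes--Cantor process the $k$ sites evolve independently, so conditionally on $Z$ the number of disagreeing sites $\theta(\mathbf{s}^1_1, \mathbf{s}^1_2)$ is a sum of $k$ independent Bernoulli trials, i.e.\ binomial with $k$ trials and success probability $\tfrac34\bigl(1 - e^{-2(\tau+Z)}\bigr)$; this is exactly the stated (mixed) binomial law, with $Z \sim \mathrm{Exp}(1)$. I do not anticipate a genuine obstacle here: the only point requiring care is to keep the two sources of randomness properly separated --- the coalescent time $Z$, which fixes the gene tree, and the site-wise substitution process run on top of it --- so that the success parameter is read as the \emph{random} quantity $\tfrac34\bigl(1 - e^{-2(\tau+Z)}\bigr)$ rather than its expectation over $Z$. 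The single-site Jukes--Cantor identity is the one computation to verify, and it is routine.
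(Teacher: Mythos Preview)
Your proposal is correct and is precisely the argument the paper has in mind: the paper does not give a proof at all but simply states that the lemma follows ``immediately'' from the two-level model, and your write-up spells out exactly that immediate computation (condition on the coalescence time $Z$, use independence of sites, and reduce the per-site mismatch probability to the standard Jukes--Cantor formula along a path of total length $2(\tau+Z)$). There is nothing to add or correct.
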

The memoryless property of the exponential
gives:
\begin{lemma}[Mixture]\label{lemma:mixture}
Let $S$ be a two-leaf species tree
with $d_{12} = 2$ 
and let $S^+$ be a two-leaf
species tree  with $d_{12} = 2 - 2f$,
where in both cases $\nu_e = 1$ for all
$e$. Let $\P_0$ and $\Q$ be the distributions of
$\theta(\mathbf{s}^1_1, \mathbf{s}^1_2)$ for a single gene
under $S$ and $S^+$ respectively. Then, there is
$\P_1$ such that,
\begin{displaymath}
\Q
= (1- \sigma_f)\,\P_0 + \sigma_f\,\P_1,
\end{displaymath}
where $\sigma_f = O(f)$, as $f \to 0$. 
\end{lemma}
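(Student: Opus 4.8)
The plan is to reduce everything to a decomposition of the law of the coalescence time, exploiting the preceding distance distribution lemma: under both $\P_0$ and $\Q$, the statistic $\theta(\mathbf{s}^1_1,\mathbf{s}^1_2)$ is binomial with $k$ trials and success probability $\tfrac{3}{4}\left(1 - e^{-2(\tau + Z)}\right)$, a deterministic function of the coalescence time $\tau + Z$, where $Z$ is exponential with mean $1$ and $\tau = 1$ under $\P_0$, while $\tau = 1 - f$ under $\Q$. Thus the only difference between the two hypotheses lives in the law of $\tau + Z$, and it suffices to decompose that law; equality of coalescence-time laws will then upgrade to equality of $\theta$-laws, since the binomial resampling kernel is identical under both hypotheses and depends on $Z$ only through $\tau + Z$.

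First I would partition the probability space under $\Q$ according to $\{Z \le f\}$ versus $\{Z > f\}$, and set $\sigma_f := \P(Z \le f) = 1 - e^{-f}$. By the law of total probability,
\begin{equation*}
\Q = (1-\sigma_f)\,\Q(\,\cdot \mid Z > f) + \sigma_f\,\Q(\,\cdot \mid Z \le f),
\end{equation*}
so the lemma reduces to identifying the first conditional law with $\P_0$ and simply naming the second $\P_1$.

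The key step, and the only place an actual idea enters, is the memoryless property of the exponential. Conditionally on $\{Z > f\}$, the overshoot $Z - f$ is again exponential with mean $1$; call it $Z'$. On this event the coalescence time under $\Q$ is $(1-f) + Z = 1 + (Z - f) = 1 + Z'$, which is exactly the law of the coalescence time under $\P_0$ (there it equals $1 + Z$ with $Z$ exponential of mean $1$). Since $\theta$ depends on $Z$ only through $\tau + Z$, the conditional law of $\theta$ under $\Q$ given $\{Z > f\}$ coincides with $\P_0$. Defining $\P_1$ to be the conditional law of $\theta$ under $\Q$ given $\{Z \le f\}$ then yields $\Q = (1-\sigma_f)\P_0 + \sigma_f\P_1$ as claimed.

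Finally I would record that $\sigma_f = 1 - e^{-f} = f + O(f^2) = O(f)$ as $f \to 0$, finishing the proof. I do not expect a genuine obstacle here: the entire content is the exact cancellation whereby shrinking the divergence time by $f$ is compensated, on the bulk event $\{Z > f\}$, by the matching time shift of the coalescent waiting time, i.e., $(1-f) + (Z - f \mid Z > f) \stackrel{d}{=} 1 + Z$. The one point meriting explicit care is that this is a statement about the full sampling distribution of $\theta$, binomial draw included, so I would state clearly that the draw is governed by the same kernel under both hypotheses and factors through $\tau + Z$, so that equality of coalescence-time laws implies equality of the $\theta$-laws.
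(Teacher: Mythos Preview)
Your proposal is correct and matches the paper's approach exactly: the paper does not give a detailed proof but simply states that the lemma follows from the memoryless property of the exponential, and you have spelled out precisely that argument (partition on $\{Z\le f\}$ versus $\{Z>f\}$, identify $(1-f)+Z$ given $\{Z>f\}$ with $1+Z'$ via memorylessness, and read off $\sigma_f=1-e^{-f}=O(f)$). Your care in noting that the binomial kernel depends on $Z$ only through $\tau+Z$, so that equality of coalescence-time laws lifts to equality of $\theta$-laws, is a good explicit justification of a step the paper leaves tacit.
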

\begin{proof}
The proof of the lemma is straightforward:  We couple perfectly the coalescence time for $\Q$ conditioned on $Z \geq f$ and the unconditional coalescence time for $\P_0$ and this extends to a coupling of the distances between the sequences. Thus $\P_1$ is obtained by conditioning $\Q$ on the event that $Z$ is $\leq f$ and $\sigma_f$ is the probability of that event.
\end{proof}

More generally (this paragraph may be skipped as it will not play a role below), consider a species tree 
$S = (V,E;L,r)$
with 
$n$ leaves. Each gene $j = 1,\ldots, m$ 
has a genealogical history 
represented by its gene tree $T_j$
distributed
according to the following process: 
looking backwards in time, 
on each branch of the species tree, 
the coalescence of any two lineages 
is exponentially distributed with rate 1, 
independently from all other pairs; 
whenever two branches merge in the species tree, 
we also merge the lineages of the corresponding populations, that is, the coalescence proceeds on the \emph{union} of the lineages. 
More specifically, the probability density of a realization of this model for $m$ independent genes is
\begin{align*}
	&\prod_{j=1}^m \prod_{e\in E} 
	\exp\left(-\binom{O_j^{e}}{2} 
	\left[\sigma_j^{e, O_j^{e}+1} - \sigma_j^{e, O_j^{e}}\right]\right)
	\prod_{\ell=1}^{I_j^{e}-O_j^{e}}
	\exp\left(-\binom{\ell}{2} 
	\left[\sigma_j^{e, \ell} - \sigma_j^{e, \ell-1}\right]\right),
\end{align*}
where, for gene $j$ and branch $e$, 
$I_j^{e}$ is the number of lineages entering $e$,
$O_j^{e}$ is the number of lineages exiting $e$, and
$\sigma_j^{e,\ell}$ is the $\ell^{th}$ coalescence time in $e$;
for convenience, we let $\sigma_j^{e,0}$ and
$\sigma_j^{e,I_j^{e}-O_j^{e}+1}$ be respectively
the divergence times of $e$ and of its parent population.
The resulting trees $T_j$s may have topologies that differ
from that of the species tree $S$. This may occur
as a result of an incomplete lineage sorting event,
i.e., the failure of two lineages to coalesce in
a population. See Figure~\ref{fig:3-species} for
an illustration.

\paragraph{A more abstract setting}
Before proving Theorem~\ref{thm:main1}, we re-set
the problem in a more generic setting that will make 
the computations more transparent.  
Let $\P_0$ and $\P_1$ denote two different distributions for a random variable $X$ supported on $[0,1]$. 
Given these distributions,  
we define two distributions, which we will also denote by 
$\P_0$ and $\P_1$, for a random variable $\theta$ 
taking values in $\{0,\ldots,k\}$ for some $k$.
These are defined by 
\begin{equation}\label{eq:theta-distribution}
\P_i[\theta = \ell] = \binom{k}{\ell} \E_i[X^\ell (1 - X)^{k-\ell}],
\end{equation}
where $\E_i$ is the expectation operator corresponding
to $\P_i$ for the random variable $X$ defined on $[0,1]$.
As before, we let
$$
\Q = (1- \sigma_f)\,\P_0 + \sigma_f\,\P_1,
$$
for some $\sigma_f = O(f)$.
We make the following assumptions which
are satisfied in the setting of the previous section.
\begin{enumerate}
	\item[A1.] {\bf Disjoint supports:} $X$ admits a density
	whose support is $(p_0,p^0)$ under $\P_0$ and
	$(p_0 - \phi_f, p_0)$ under $\P_1$, where $0 < p_0 < p^0 < 1$ (independent
	of $f$) and $\phi_f = O(f)$. (In the setting of Lemma~\ref{lemma:mixture}, $p_0 = \frac{3}{4}(1 - e^{-2})$, $p_0 - \phi_f = \frac{3}{4}(1 - e^{-(2-2f)})$, and $p^0 = 3/4$.)
	
	\item[A2.] {\bf Bounded density around $p^0$:} There exist $\rho  \in (0,1)$ and $\bar{p}  \in (p_0, p^0)$, 
	not depending on $f$, such that the following
	holds. Under $\P_0$, the density
	of $X$ on $(p_0,\bar{p})$ lies in the interval $[\rho,\rho^{-1}]$, i.e., for any measurable
	subset $\mathcal{X} \subseteq (p_0,\bar{p})$
	we have
	$$
	\P_0[X \in \mathcal{X}] \in  \left[\rho |\mathcal{X}|, \rho^{-1} |\mathcal{X}|\right],
	$$ 
	where $|\mathcal{X}|$ is the Lebesgue measure of $\mathcal{X}$.
	(In the setting of Lemma~\ref{lemma:mixture}, under $\P_0$
	the density of $X$ on $(p_0,p^0)$ is 
	$\frac{4 e^{1/2}}{3}(1-4x/3)^{-3/4}$. Notice that
	this density is {\bf not} bounded from below over
	the entire interval $(p_0,p^0)$.)
	
\end{enumerate}
The first assumption asserts that the supports of $X$
under $\P_0$ and $\P_1$ are disjoint, while also being highly concentrated under $\P_1$ (as $f \to 0$).
The key point being that, under $\P_1$, $X$ lies
near the lower end of the support under $\P_0$, 
which partly explains the effectiveness of a quantile-based test to distinguish between $\P_0$ and $\Q$.
The second, more technical, assumption asserts that, under $\P_0$, the density of $X$ is bounded from above and below in a neighborhood of the lower end of its support. As
we will see in Section~\ref{section:lower}, 
the dominant contribution to the difference between
$\P_0$ and $\Q$ comes from the regime where
$X$ lies close to $p_0$ and we will need to control
the probability of observing $X$ there.

\section{Lower bound}
\label{section:lower}

The proof of the lower bound is based on establishing an upper bound on the Hellinger distance between $\P_0$ and $\Q$. 
The tensoring property of the Hellinger distance then allows to directly obtain an upper bound on the
Hellinger distance between 
$\P_0^{\otimes m}$ and $\Q^{\otimes m}$. 
Using a standard inequality, this finally gives the desired bound on the total variation distance between $\P_0^{\otimes m}$ and $\Q^{\otimes m}$. 

We first rewrite the Hellinger distance in a form that is convenient for asymptotic expansion. 
In the abstract setting of Section~\ref{section:definitions}, 
the Hellinger distance can be written as
\begin{eqnarray}
H^2(\P_0,\Q)
&=& \sum_{j=0}^k \left[\sqrt{\Q[\theta = j]} - \sqrt{\P_0[\theta=j]}\right]^2\nonumber\\
&=& \sum_{j=0}^k \left[\sqrt{1+\sigma_f\left(\frac{\P_1[\theta=j]}{\P_0[\theta=j]} - 1 \right)}-1\right]^2
\P_0[\theta = j]\nonumber\\
&=& \sum_{j=0}^k \left[\sqrt{1+\sigma_f\left(\frac{ \E_1[X^j (1 - X)^{k-j}]}{ \E_0[X^j (1 - X)^{k-j}]} - 1 \right)}-1\right]^2
\P_0[\theta = j]\nonumber\\
&=& \sum_{j=0}^k h_{\sigma_f}\left(\frac{ \E_1[X^j (1 - X)^{k-j}]}{ \E_0[X^j (1 - X)^{k-j}]}\right)
\P_0[\theta = j], \label{eq:h2totalsum}
\end{eqnarray}
where we define
\begin{equation} \label{eq:defhb}
h_b(s) := (\sqrt{1 + b(s-1)} - 1)^2.
\end{equation} 
We will refer to
$$
\frac{ \E_1[X^j (1 - X)^{k-j}]}{ \E_0[X^j (1 - X)^{k-j}]}
= \frac{ \binom{k}{j}\E_1[X^j (1 - X)^{k-j}]}{ \binom{k}{j}\E_0[X^j (1 - X)^{k-j}]}
= \frac{\P_1[\theta=j]}{\P_0[\theta=j]},
$$
as the {\em likelihood ratio} and
to
$$
\P_0[\theta = j],
$$
as the {\em null probability}.

We prove the following proposition, which
implies Theorem~\ref{thm:main1}.
\begin{proposition}\label{prop:main1}
Assume that
$k = f^{-2 + 2\kappa}$
where $0 < \kappa < 1$
and that Assumptions A1 and A2 hold.
As $f \to 0$,
$$
H^2(\P_0,\Q) = O\left(f^{2} \sqrt{k}\right).
$$
\end{proposition}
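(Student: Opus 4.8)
The plan is to bound $H^2(\P_0,\Q)$ by a $\chi^2$-divergence and then reduce everything to showing that divergence is $O(\sqrt{k})$. Write $\psi_0,\psi_1$ for the densities of $X$ under $\P_0,\P_1$, set $r_j := \P_1[\theta=j]/\P_0[\theta=j]$, and note $\Q[\theta=j] = \P_0[\theta=j]\,\bigl(1+\sigma_f(r_j-1)\bigr)$. Each summand of \eqref{eq:h2totalsum} then has the form $\bigl(\sqrt{1+x}-1\bigr)^2\P_0[\theta=j]$ with $x=\sigma_f(r_j-1)\ge-\sigma_f$. Using $\bigl(\sqrt{1+x}-1\bigr)^2=x^2/(\sqrt{1+x}+1)^2\le x^2/[4(1-\sigma_f)]$ for all $x\ge-\sigma_f$, together with the identity $\sum_j(r_j-1)^2\P_0[\theta=j]=\sum_j\P_1[\theta=j]^2/\P_0[\theta=j]-1=:\chi^2(\P_1\,\|\,\P_0)$ (which uses $r_j\P_0[\theta=j]=\P_1[\theta=j]$), I obtain
\begin{equation*}
H^2(\P_0,\Q)\ \le\ \frac{\sigma_f^2}{4(1-\sigma_f)}\,\chi^2(\P_1\,\|\,\P_0).
\end{equation*}
Since $\sigma_f=O(f)$, the proposition follows once I show $\chi^2(\P_1\,\|\,\P_0)=O(\sqrt{k})$.

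The mechanism behind the $\sqrt{k}$ is a scale separation: by Assumption A1, $\phi_f=O(f)\ll k^{-1/2}=f^{1-\kappa}$, so $\phi_f\sqrt{k}=O(f^{\kappa})\to0$. Thus the width-$\phi_f$ window carrying $X$ under $\P_1$ is far narrower than the binomial smoothing scale $k^{-1/2}$, and in $\theta$-space $\P_1$ behaves like $\mathrm{Bin}(k,p_0)$: its mass concentrates on $|j-kp_0|=O(\sqrt{k})$, where $\P_1[\theta=j]$ is of order $k^{-1/2}$. On that same range $\P_0[\theta=j]$ sits at the \emph{edge} of the support $(p_0,p^0)$ of $X$; since $\psi_0$ is bounded below near $p_0$ (Assumption A2), $\P_0$ spreads its mass smoothly over $\Theta(k)$ values and is only of order $k^{-1}$ near $j=kp_0$. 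Hence $\P_1[\theta=j]^2/\P_0[\theta=j]=\Theta(1)$ over a range of $\Theta(\sqrt{k})$ indices, which is exactly where the $\sqrt{k}$ comes from.

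To make this rigorous, I would write $b_j(x):=x^j(1-x)^{k-j}$ and bound
\begin{equation*}
\frac{\P_1[\theta=j]^2}{\P_0[\theta=j]}=\binom{k}{j}\frac{\E_1[b_j]^2}{\E_0[b_j]}\ \le\ \binom{k}{j}\frac{M_j^2}{\rho\int_{p_0}^{\bar p}b_j(x)\,\mathrm{d}x},\qquad M_j:=\max_{x\in[p_0-\phi_f,\,p_0]}b_j(x),
\end{equation*}
using $\int\psi_1=1$ in the numerator and $\psi_0\ge\rho$ on $(p_0,\bar p)$ (Assumption A2) in the denominator, and then split on $j/k$. For $j/k>\bar p$ the integrand is increasing on $[p_0,\bar p]$, so $\int_{p_0}^{\bar p}b_j\ge(\bar p-p_0)\,b_j(p_0)=(\bar p-p_0)M_j$ and the contribution is $\le\binom{k}{j}p_0^j(1-p_0)^{k-j}/[\rho(\bar p-p_0)]$, summing to $O(1)$. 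For $j/k\in[p_0,\bar p]$ one has $M_j=b_j(p_0)$ and a Laplace bound $\int_{p_0}^{\bar p}b_j\ge c\,b_j(j/k)/\sqrt{k}$, whence the summand is $O\bigl(\exp(-c'\,k(j/k-p_0)^2)\bigr)$ and the sum is $O(\sqrt{k})$. For $j/k<p_0$ I would use $M_j\le b_j(p_0)\exp\bigl(O(k\phi_f(p_0-j/k))\bigr)$ and $\int_{p_0}^{\bar p}b_j\ge c\,b_j(p_0)/\bigl(\sqrt{k}+k(p_0-j/k)\bigr)$, reducing the summand to $\binom{k}{j}p_0^j(1-p_0)^{k-j}$ times polynomial and exponential-tilt factors; completing the square, the tilt shifts the binomial mean by $O(k\phi_f)=o(\sqrt{k})$ and inflates the total by at most $\exp\bigl(O(k\phi_f^2)\bigr)=\exp(O(f^{2\kappa}))=O(1)$, so this region also contributes $O(\sqrt{k})$. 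Adding the three regions gives $\chi^2(\P_1\,\|\,\P_0)=O(\sqrt{k})$.

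The hard part is the uniform, quantitative Laplace/local-CLT control of the incomplete-Beta integrals $\int_{p_0}^{\bar p}b_j$ and of the binomial weights $\binom{k}{j}p_0^j(1-p_0)^{k-j}$ near the edge $j\approx kp_0$, simultaneously over all $j$ and uniformly as $f\to0$. The transition band $|j-kp_0|=O(\sqrt{k})$ is the delicate one: there the ``fraction of the Gaussian peak retained by the support'' and the ratio $M_j/b_j(p_0)$ must both be controlled with constants that do not degrade, and one must check that the corrections from the width-$\phi_f$ window of $\P_1$ stay bounded over the $O(\sqrt{k})$-wide range carrying the mass. This last point is precisely where the hypothesis $k=f^{-2+2\kappa}$ with $0<\kappa<1$ enters, via $\phi_f\sqrt{k}\to0$ and $k\phi_f^2\to0$.
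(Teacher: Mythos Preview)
Your approach is correct and takes a genuinely different route from the paper's. You first pass to the $\chi^2$-divergence via the clean pointwise inequality $\bigl(\sqrt{1+x}-1\bigr)^2\le x^2/[4(1-\sigma_f)]$, reducing the problem to the standalone estimate $\chi^2(\P_1\|\P_0)=O(\sqrt k)$, and then handle that with a three-region split ($j/k>\bar p$, $j/k\in[p_0,\bar p]$, $j/k<p_0$). The paper instead bounds $H^2$ directly using the two-sided inequality $h_{\sigma_f}(s)\le (\sigma_f s)\wedge(\sigma_f s)^2$, partitions $[0,1]$ into five regimes $J_1,J_0,J_0',J_1',J_2'$, and further subdivides the border and low-substitution regimes into dyadic shells $I_\ell,I_\ell'$ to control the likelihood ratio and the integrator $\P_0[\theta/k\in I_\ell']$ separately. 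In the deep left tail $J_2'$ the paper switches to the \emph{linear} branch $h_{\sigma_f}(s)\le\sigma_f s$, but your analysis shows this is not essential: the quadratic branch (equivalently, the $\chi^2$ bound) already suffices because the Gaussian decay $\exp(-c\,2^{2\ell})$ of $\P_0[\theta/k\in I_\ell']$ beats the growth $\exp(C\,2^{\ell})$ of the squared ratio. What your route buys is a shorter, more modular argument with a useful intermediate fact ($\chi^2=O(\sqrt k)$); what the paper's buys is a template that would continue to work in variants where $\chi^2$ blows up but $H^2$ does not. Your identification of the ``hard part''---uniform Laplace control of $\int_{p_0}^{\bar p}b_j$ and of the binomial weights near the edge $j\approx kp_0$---is exactly right, and your sketched bounds (e.g., $M_j\le b_j(p_0)\exp(Ck\phi_f(p_0-j/k))$ from $\log(1\pm t)\le\pm t$, and $\int_{p_0}^{\bar p}b_j\ge c\,b_j(p_0)/(\sqrt k+k(p_0-j/k))$ by integrating over a window of width $(\sqrt k+k(p_0-j/k))^{-1}$) can be made rigorous with only routine work; the tilt computation is also correct since $k\phi_f^2=O(f^{2\kappa})\to 0$ and $k\phi_f=o(\sqrt k)$.
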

\noindent The proof of Proposition~\ref{prop:main1}
follows in the next section.

Finally:
\begin{proof}[Proof of Theorem~\ref{thm:main1}]
The tensorization property of the Hellinger distance, as stated in~(\ref{eq:tensorize}), together with Proposition~\ref{prop:main1} imply that 

\[
\frac{1}{2}H^2(\P_0^{\otimes m},\Q^{\otimes m})
= 1 - \left(
1 - \frac{1}{2}H^2(\P_0,\Q)\right)^m =  1 - \left(
1 - O\left(f^{2} \sqrt{k}\right) \right)^m < \delta, 
\] 
if $m \leq c f^{-2} k^{-1/2}$ for a small enough constant $c$. 
Thus, by (\ref{eq:tv-h2}), we have 
\[
\|\P_0^{\otimes m} - \Q^{\otimes m}\|^2_{\mathrm{TV}}
\leq H^2(\P_0^{\otimes m},\Q^{\otimes m})
\left[1 - \frac{1}{4}H^2(\P_0^{\otimes m},\Q^{\otimes m})\right] < \delta,
\]
as needed. 

\end{proof}

\subsection{Proof of Proposition~\ref{prop:main1}}
\label{section:proof-sketch}

From~\eqref{eq:h2totalsum}, in order to 
bound the Hellinger distance from above, we need
upper bounds on the likelihood ratio $\frac{ \E_1[X^j (1 - X)^{k-j}]}{ \E_0[X^j (1 - X)^{k-j}]}$ and on the
null probability
$\P_0[\theta = j]$ for each term in the sum. 
The basic intuition is that the contributions of those terms where $\theta$ is far from its mean under $\P_1$
(which is $\approx p_0$) are negligible. Indeed: 
\begin{itemize}
	\item When $\theta$ is much smaller than $p_0$,
	the null probability is negligible because, under $\P_0$, $X$ is almost surely greater than $p_0$. We establish that this leads to an
	overall contribution to the Hellinger distance
	of $o(f^2 \sqrt{k})$. See~\eqref{eq:small-thetas}. 
	
	\item When
	$\theta$ is much larger than $p_0$, the likelihood ratio is negligible 
	because $X$ has a much broader support under $\P_0$ than
	it does under $\P_1$.
	In that case, we show that the overall contribution to the
	Hellinger distance is $O(f^2)$. To get a sense of why that is, note that
	as $f \to 0$
	$$
	h_{\sigma_f}(0) = [\sqrt{1 - \sigma_f} - 1]^2 = O(f^2).
	$$
	See Claims~\ref{lemma:less-than-1},~\ref{lemma:JJ1}
	and~\ref{claim:high-substitution}.
	
\end{itemize}
On the other hand, by~\eqref{eq:theta-distribution},
under both $\P_0$ and $\P_1$, the random variable $\theta$ conditioned on $X$ is binomial with mean $kX$ and
standard deviation of order $\sqrt{k}$. 
In the regime considered under Theorem~\ref{thm:main1},
i.e., $k = f^{-2 + 2\kappa}$, we have further that
$f = o(1/\sqrt{k})$.
Hence by Assumption A1, under $\P_1$, $X$ has support 
of size $O(f)$ and the unconditional random variable $\theta$ also has standard deviation of order $\sqrt{k}$. In this bulk regime, our analysis
relies on the following insight:
\begin{itemize}
	\item How big is each term in the Hellinger sum?
	In order for $\E_0[X^j (1 - X)^{k-j}]$ to be non-negligible, $X$ must lie within roughly $\sqrt{k}$ of $p_0$, which under Assumption A2 has probability $\Theta(1/\sqrt{k})$. On the other hand, under $\P_1$, $X$ is almost
	surely close to $p_0$. That produces a likelihood ratio
	of order $\sqrt{k}$. 
	Therefore, recalling that $f \sqrt{k} = o(1)$, the term 
	\[
	h_{\sigma_f} \left( \frac{ \E_1[X^j (1 - X)^{k-j}]}{ \E_0[X^j (1 - X)^{k-j}]} \right) = 
	\left[\sqrt{1+\sigma_f\left(\frac{ \E_1[X^j (1 - X)^{k-j}]}{ \E_0[X^j (1 - X)^{k-j}]} - 1 \right)}-1\right]^2, 
	\]
	is of order $f^2 k$. Moreover, by the argument
	above, the overall null probability of the bulk is of order $1/\sqrt{k}$. Thus, we expect that the Hellinger distance in this regime is of order 
	$\sqrt{k} f^2$ as stated in Proposition~\ref{prop:main1}.
	It will be convenient to divide the analysis
	into $\theta$-values below $p_0$ (see Claims~\ref{claim:low-sub-ratio},~\ref{claim:low-sub-int},~\ref{claim:low-sub-int2} and~\ref{claim:low-substitution}) and
	above $p_0$ (see Claims~\ref{claim:border-ratio},~\ref{claim:border-int},~\ref{claim:border-int2} and~\ref{claim:border-regime}).
\end{itemize}
 The full details are somewhat delicate, as we need to carefully consider various intervals of summands $j$ according to the behavior of the null probability 
 $\P_0[\theta = j]$ and the likelihood ratio $\frac{ \E_1[X^j (1 - X)^{k-j}]}{ \E_0[X^j (1 - X)^{k-j}]}$. 

In the next subsection we introduce some notation and prove some simple estimates that will be used in the proofs.

\subsection{Some useful lemmas}
\label{subsec:useful}

The following is Lemma 4 in~\cite{CaiWu:14}:
\begin{lemma}\label{lemma:h2}
For $b > 0$, let
$
h_b(s) = (\sqrt{1 + b(s-1)} - 1)^2
$ 
\begin{enumerate}
\item For any $b > 0$, the function
$h_b(s)$ is strictly decreasing
on $[0,1]$ and strictly increasing on $[1,+\infty)$.

\item For any $b > 0$ and $s \geq 1$,
$$
h_b(s) 
\leq [b(s-1)] \land [b(s-1)]^2
\leq [bs] \land [bs]^2.
$$
\end{enumerate}
\end{lemma}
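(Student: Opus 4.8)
The plan is to prove both parts by direct differentiation and elementary algebra; this is essentially a routine computation, and the only point demanding any care is bookkeeping about where $h_b$ is real-valued. Throughout I would write $g(s) = 1 + b(s-1)$, so that $h_b(s) = (\sqrt{g(s)} - 1)^2$ and $h_b$ is defined precisely where $g(s) \geq 0$. Since $b = \sigma_f \to 0$ in the application, and in any case for $b \in (0,1]$ one has $g(s) \geq 0$ on all of $[0,1]$, I would simply restrict attention to that regime so that the stated domains $[0,1]$ and $[1,+\infty)$ lie inside the domain of $h_b$.

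For Part 1, I would differentiate. Because $g'(s) = b$, the chain rule gives
$$
h_b'(s) = 2(\sqrt{g(s)} - 1)\cdot \frac{b}{2\sqrt{g(s)}} = b\left(1 - \frac{1}{\sqrt{g(s)}}\right),
$$
valid wherever $g(s) > 0$. The sign of $h_b'(s)$ is therefore controlled entirely by whether $g(s)$ is below or above $1$. Since $g(1) = 1$ and $g$ is strictly increasing (as $b > 0$), we have $g(s) < 1$ for $s < 1$ and $g(s) > 1$ for $s > 1$, whence $h_b'(s) < 0$ on $[0,1)$ and $h_b'(s) > 0$ on $(1,+\infty)$. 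This yields the claimed strict monotonicity.

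For the first inequality of Part 2, I would substitute $u = b(s-1)$, which is $\geq 0$ since $s \geq 1$ and $b > 0$, so the target becomes $(\sqrt{1+u}-1)^2 \leq u \land u^2$. Expanding directly, $(\sqrt{1+u}-1)^2 = u + 2 - 2\sqrt{1+u} \leq u$ because $\sqrt{1+u} \geq 1$; and rationalizing, $\sqrt{1+u}-1 = u/(\sqrt{1+u}+1)$, so $(\sqrt{1+u}-1)^2 = u^2/(\sqrt{1+u}+1)^2 \leq u^2$ since the denominator is at least $1$. Taking the minimum gives $h_b(s) \leq u \land u^2 = [b(s-1)] \land [b(s-1)]^2$.

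Finally, the inequality $[b(s-1)]\land[b(s-1)]^2 \leq [bs]\land[bs]^2$ follows coordinatewise: from $0 \leq s-1 \leq s$ and $b > 0$ we get both $b(s-1) \leq bs$ and $[b(s-1)]^2 \leq [bs]^2$, and one has $\min(a_1,a_2) \leq \min(c_1,c_2)$ whenever $a_1 \leq c_1$ and $a_2 \leq c_2$. I do not anticipate a genuine obstacle in any of these steps; the one thing to watch is the real-valuedness of $h_b$ on the stated intervals, which I address by working in the regime $b \leq 1$ relevant to $b = \sigma_f$.
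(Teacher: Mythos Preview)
Your proof is correct. The paper does not actually prove this lemma; it is quoted verbatim as Lemma~4 of Cai--Wu~\cite{CaiWu:14} and used without argument. Your direct verification via differentiation for Part~1 and the substitution $u = b(s-1)$ with the two elementary bounds $(\sqrt{1+u}-1)^2 \leq u$ (by expansion) and $(\sqrt{1+u}-1)^2 \leq u^2$ (by rationalizing) for Part~2 is exactly the kind of routine calculus one would expect, and all steps check out. Your remark about restricting to $b \in (0,1]$ so that $h_b$ is real-valued on all of $[0,1]$ is the right way to handle the slight imprecision in the stated domain; since in every application in the paper $b = \sigma_f = O(f) \to 0$, this is harmless.
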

The following lemmas follow from straightforward 
calculus. 
\begin{lemma}\label{lemma:entropy}
For $j \in \{0,\ldots,k\}$ and $x \in (0,1)$, let 
$$
\Phi_j(x) = \frac{j}{k}\log x + \frac{k-j}{k}\log (1 - x).
$$
Then
$$
\Phi'_j(x) = \frac{j}{k}\frac{1}{x} - \frac{k-j}{k}\frac{1}{1 - x}
= \frac{1}{x(1 - x)}\left(\frac{j}{k} - x\right).
$$
As a result $\Phi_j$ is increasing on $[0,\frac{j}{k}]$
and decreasing on $[\frac{j}{k},1]$, and
$
\Phi'_j(\frac{j}{k}) = 0. 
$
\end{lemma}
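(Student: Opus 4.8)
The plan is to treat this as the elementary calculus statement it is: differentiate $\Phi_j$ explicitly, simplify to a common denominator, and read off the monotonicity from the sign of the resulting derivative. First I would compute the two term-by-term derivatives. Since $\frac{d}{dx}\log x = 1/x$ and $\frac{d}{dx}\log(1-x) = -1/(1-x)$ on the open interval $(0,1)$, linearity of differentiation gives at once
$$
\Phi_j'(x) = \frac{j}{k}\cdot\frac{1}{x} - \frac{k-j}{k}\cdot\frac{1}{1-x},
$$
which is the first displayed expression in the statement.

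Next I would place these two fractions over the common denominator $x(1-x)$. The numerator becomes $\frac{j}{k}(1-x) - \frac{k-j}{k}\,x$, and collecting the $x$-terms gives $-\frac{j}{k}x + \frac{j}{k}x - x = -x$, so that the numerator simplifies to exactly $\frac{j}{k} - x$. This produces the second displayed expression $\Phi_j'(x) = \frac{1}{x(1-x)}\bigl(\frac{j}{k}-x\bigr)$, and in particular $\Phi_j'(\frac{j}{k}) = 0$ since the numerator vanishes there.

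Finally, for the monotonicity I would observe that $x(1-x) > 0$ for every $x \in (0,1)$, so the sign of $\Phi_j'(x)$ coincides with the sign of $\frac{j}{k}-x$: positive when $x < \frac{j}{k}$ and negative when $x > \frac{j}{k}$. This yields immediately that $\Phi_j$ is increasing on $[0,\frac{j}{k}]$ and decreasing on $[\frac{j}{k},1]$, as claimed. There is no substantive obstacle here; the only points deserving a brief word are the degenerate cases $j=0$ and $j=k$ (where one of the two intervals collapses to a single point and one logarithmic term drops out under the convention $0\cdot\log 0 = 0$) and the endpoints $x \to 0^+$ or $x \to 1^-$ (where $\Phi_j$ diverges to $-\infty$), neither of which affects the interior computation. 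The entire content of the lemma is the common-denominator simplification carried out above.
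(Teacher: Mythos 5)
Your proof is correct and is precisely the ``straightforward calculus'' the paper invokes without writing out: term-by-term differentiation, simplification over the common denominator $x(1-x)$, and reading the monotonicity off the sign of $\frac{j}{k}-x$. Your aside on the degenerate cases $j=0$, $j=k$ and the endpoint behavior is harmless extra care that the paper simply omits.
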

\begin{lemma}\label{lemma:distance}
For $j \in \{0,\ldots,k\}$, $p \in (0,1)$, and $x \in [0,p)$, let 
$$
\Psi_{j,p}(x) = \frac{j}{k}\log \frac{p}{p-x} + \frac{k-j}{k}\log \frac{1-p}{1-p+x}.
$$
Then:
\begin{enumerate}
\item The first two derivatives are:
$$
\Psi'_{j,p}(x) = \frac{j}{k}\frac{1}{p-x} - \frac{k-j}{k} \frac{1}{1-p+x}
= \frac{1}{(p-x)(1-p+x)}\left(\frac{j}{k} - (p-x)\right),
$$
and
$$
\Psi''_{j,p}(x) = \frac{j}{k}\left\{\frac{1}{(p-x)^2}\right\} + \frac{k-j}{k} \left\{\frac{1}{(1-p+x)^2}\right\} \geq \frac{1}{2},
$$
(since the terms in curly brackets are at least $1$
and one of $\frac{j}{k}$ or $\frac{k-j}{k}$ is greater
or equal than $1/2$).

\item By a Taylor expansion around $x = 0$, we have
for $x \in [0,p)$ and some $x^* \in [0,x]$
$$
\Psi_{j,p}(x) 
= \frac{1}{p(1-p)}\left(\frac{j}{k} - p\right) x
+ \frac{x^2}{2} \Psi''_{j,p}(x^*)
\geq \frac{1}{p(1-p)}\left(\frac{j}{k} - p\right) x 
+ \frac{1}{4} x^2.
$$
\end{enumerate}
\end{lemma}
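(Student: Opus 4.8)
The plan is to prove Lemma~\ref{lemma:distance} by elementary calculus, since $\Psi_{j,p}$ is a convex combination of two logarithmic terms that are each smooth on the domain $x \in [0,p)$. First I would establish part~1. Differentiating term by term, $\frac{d}{dx}\log\frac{p}{p-x} = \frac{1}{p-x}$ and $\frac{d}{dx}\log\frac{1-p}{1-p+x} = -\frac{1}{1-p+x}$, which immediately gives the first displayed form of $\Psi'_{j,p}$. To reach the factored form I would place the two fractions over the common denominator $(p-x)(1-p+x)$ and simplify the numerator $\frac{j}{k}(1-p+x) - \frac{k-j}{k}(p-x)$ to $\frac{j}{k} - (p-x)$, using $\frac{j}{k} + \frac{k-j}{k} = 1$. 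Differentiating $\Psi'_{j,p}$ once more then yields $\Psi''_{j,p}(x) = \frac{j}{k}(p-x)^{-2} + \frac{k-j}{k}(1-p+x)^{-2}$.

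The one step that genuinely uses the hypotheses $p \in (0,1)$ and $x \in [0,p)$ is the bound $\Psi''_{j,p} \geq \tfrac12$, and this is where I would be careful. On this domain $0 < p - x < 1$, and because $x < p$ forces $1-p+x < 1$ while $x \geq 0$ keeps $1-p+x > 0$, also $0 < 1-p+x < 1$; hence both reciprocal-square terms exceed $1$. Since the weights $\frac{j}{k}$ and $\frac{k-j}{k}$ are nonnegative and sum to $1$, the weighted average of two quantities each exceeding $1$ is itself at least $1$, so in particular $\Psi''_{j,p}(x) \geq \tfrac12$.

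For part~2 I would first record the boundary data $\Psi_{j,p}(0) = 0$ (both logarithms vanish at $x=0$) and, reading off the factored first-derivative formula at $x = 0$, the value $\Psi'_{j,p}(0) = \frac{1}{p(1-p)}\bigl(\frac{j}{k} - p\bigr)$. Since $\Psi_{j,p}$ is $C^2$ on $[0,x] \subset [0,p)$, Taylor's theorem with Lagrange remainder produces an $x^* \in [0,x]$ for which $\Psi_{j,p}(x) = \Psi'_{j,p}(0)\,x + \tfrac12 \Psi''_{j,p}(x^*)\,x^2$, which is the claimed equality. Substituting the lower bound $\Psi''_{j,p}(x^*) \geq \tfrac12$ from part~1 together with $x^2 \geq 0$ gives the final inequality $\Psi_{j,p}(x) \geq \frac{1}{p(1-p)}\bigl(\frac{j}{k}-p\bigr)x + \tfrac14 x^2$.

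In short, there is no substantive obstacle: the statement is routine calculus, and the single point requiring attention is the domain check $0 < p-x < 1$ and $0 < 1-p+x < 1$ underlying the second-derivative bound, which is precisely the ingredient that upgrades the exact Taylor identity to the quadratic lower bound asserted in part~2.
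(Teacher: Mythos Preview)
Your proof is correct and is exactly the straightforward calculus the paper has in mind (the paper offers no proof beyond the remark that the lemma follows from straightforward calculus, together with the parenthetical justification already written into the statement). The only microscopic difference is in the justification of $\Psi''_{j,p}\geq\tfrac12$: the paper's parenthetical argues that one of the weights $\frac{j}{k},\frac{k-j}{k}$ is at least $\tfrac12$ and the corresponding bracketed term is at least $1$, whereas you observe that both bracketed terms exceed $1$ and hence the full convex combination is at least $1$; your version is in fact slightly stronger, but the approaches are the same.
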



\subsection{Proof} 
 
Let $C$ be a large constant (not depending on $f$)
to be determined later. 
We divide up the sum in~\eqref{eq:h2totalsum} into
intervals with distinct behaviors. We consider the
following intervals for $\frac{j}{k}$:
\begin{equation*}
J_0 = \left[p_0,p_0 + C \sqrt{\frac{\log k}{k}}\right],\qquad
J_1 = \left[p_0 + C \sqrt{\frac{\log k}{k}}, 1\right],
\end{equation*}
and
\begin{eqnarray*}
J'_{0} &=& \left[p_0 - \phi_f, p_0\right],\\
J'_{1} &=& \left[p_0 - C \sqrt{\frac{\log k}{k}}, p_0 - \phi_f\right],\\
J'_{2} &=& \left[0,p_0 - C \sqrt{\frac{\log k}{k}}\right].
\end{eqnarray*}
In words $J_1' \cup J_0' \cup J_0$ is the bulk of $\P_1$, i.e., where $j/k$ sampled from $\P_1$ takes its typical values, with $J_0'$ being
the support of $X$ under $\P_1$. (This bulk interval
is further sub-divided into three intervals whose analyses are slightly different.) The intervals $J_2'$ and $J_1$ are where $j/k$ takes atypically small and large values under $\P_1$ respectively. 
For a subset of $\frac{j}{k}$-values $J$, we write 
the contribution of $J$ to the Hellinger distance as
$$
H^2(\P_0,\Q)|_J
= 
\sum_{j: j/k \in J} h_{\sigma_f} \left(\frac{ \E_1[X^j (1 - X)^{k-j}]}{ \E_0[X^j (1 - X)^{k-j}]}\right)
\P_0[\theta = j].
$$
Below, it will be convenient to break up the analysis into three regimes:
$J_1$, which we refer to as the {\em high-substitution
	regime}; 
$J_2' \cup J_1' \cup J_0'$, the {\em low-substitution regime};
and $J_0$, the {\em border regime}. (Refer back to Section~\ref{section:proof-sketch}
for an overview of the proof in these different regimes.
Note in particular that we combine the analyses
of the atypically low values, $J_2'$, and the
typical values below $p_0$, $J_1' \cup J_0'$, because
they follow from related derivations.)

\paragraph{High substitution regime}
We consider $J_1$ first. As we previewed in
Section~\ref{section:proof-sketch}, the argument
in this case involves proving that the likelihood
ratio is small.
Let
\begin{equation} \label{eq:defJle1}
J_{\leq 1} = \left\{
0 \leq j \leq k\,:\,\frac{ \E_1[X^j (1 - X)^{k-j}]}{ \E_0[X^j (1 - X)^{k-j}]} \leq 1
\right\}.
\end{equation}
I.e., $J_{\leq 1}$ is where the likelihood ratio
is bounded by $1$. 
Note that 
Lemma~\ref{lemma:h2} in Section~\ref{subsec:useful} says that $h_{\sigma_f}$ is monotone decreasing in the interval $[0,1]$ and we can therefore bound the sum of terms in $J_{\leq 1}$ assuming the likelihood ratio 
$\frac{ \E_1[X^j (1 - X)^{k-j}]}{ \E_0[X^j (1 - X)^{k-j}]}$ in fact equals $0$, as follows,
\begin{eqnarray*}
	&&\sum_{j\in J_{\leq 1}} \left[\sqrt{1+\sigma_f\left(\frac{ \E_1[X^j (1 - X)^{k-j}]}{ \E_0[X^j (1 - X)^{k-j}]} - 1 \right)}-1\right]^2
	\P_0[J = j]\\
	&&\qquad \leq \sum_{j\in J_{\leq 1}} \left[\sqrt{1-\sigma_f}-1\right]^2
	\P_0[J = j]\\
	&&\qquad = O(\sigma_f^2)\\ 
	&&\qquad = O(f^2). 
\end{eqnarray*}
We have thus proved the following claim.
\begin{claim}[Ratio less than $1$]\label{lemma:less-than-1}
	$$
	H^2(\P_0,\Q)|_{J_{\leq 1}} = O(f^2).
	$$
\end{claim}
\noindent Hence, to bound the sum in $J_1$, it suffices to show that 
$J_1 \subseteq J_{\leq 1}$, which we prove in the next claim. 
\begin{claim}[High substitution implies ratio less than 1] \label{lemma:JJ1}
	It holds that
$J_1 \subseteq J_{\leq 1}$. 
\end{claim}
\noindent Since the support of $X$ under $\P_1$
is below $p_0$ while it is above $p_0$ 
under $\P_0$, we might expect that 
the likelihood ratio will be bounded by $1$ on $J_1$, 
which is what we prove next. 
\begin{proof}
By Assumption A1, under $\P_1$, $X$ is a.s.~less than $p_0$. Since Lemma~\ref{lemma:entropy} implies that $\Phi_j(x)$ is monotone increasing on $[0,j/k]$,
which includes $[0,p_0]$ since $\frac{j}{k} \in J_1$, it follows that 
\begin{eqnarray}
\E_1[X^j (1 - X)^{k-j}] 
&=& \E_1[\exp(k\Phi_{j}(X))]\nonumber\\
&\leq& \exp(k \Phi_j(p_0))\nonumber\\
&=& p_0^j(1-p_0)^{k-j}.\label{eq:j1e1}
\end{eqnarray}
Let $\ecal$ be the event that 
$$
\ecal = \left\{X \in \left[p_0 + C\sqrt{\frac{\log k}{k}} - \frac{1}{k},
p_0 + C\sqrt{\frac{\log k}{k}}\right]\right\}.
$$
By Assumption A2, $\P_0[\ecal] \geq \rho/k$.
Hence, using Lemma~\ref{lemma:entropy} again,
for $\frac{j}{k} \in J_1$
\begin{eqnarray}
\E_0[X^j (1 - X)^{k-j}]
&=&   \E_0[X^j (1 - X)^{k-j}\,|\,\ecal] \P_0[\ecal]
+ \E_0[X^j (1 - X)^{k-j}\,|\,\ecal^c] \P_0[\ecal^c]\nonumber\\
&\geq& \frac{\rho}{k} p^j(1-p)^{k-j},\label{eq:j1e0}
\end{eqnarray}
where $p = p_0 + C\sqrt{\frac{\log k}{k}} - \frac{1}{k}$. 

Combining~\eqref{eq:j1e1} and~\eqref{eq:j1e0}, 
and using Lemma~\ref{lemma:distance}
with $x = p - p_0 \geq 0$ (for $k > 1$ and $C$ large enough),
we have
\begin{eqnarray*}
	\frac{\E_0[X^j (1 - X)^{k-j}]}{\E_1[X^j (1 - X)^{k-j}]}
	&\geq& \frac{\rho p^j(1-p)^{k-j}}{k p_0^j(1-p_0)^{k-j}}\\
	&=& \frac{\rho}{k} \exp\left(
	k \Psi_{j,p}(p - p_0)
	\right)\\
	&\geq& \frac{\rho}{k} \exp\left(
	k \left\{\frac{1}{p(1-p)}\left(\frac{j}{k} - p\right) x 
	+ \frac{1}{4} x^2\right\}
	\right)\\
	&\geq& \frac{\rho}{k} \exp\left(
	\frac{k}{4} 
	\left(
	C\sqrt{\frac{\log k}{k}} - \frac{1}{k}
	\right)^2
	\right)\\
	&\geq& \frac{\rho}{k} \exp\left(
	\frac{C^2}{5} 
	\log k
	\right)\\
	&\geq& 1,
\end{eqnarray*}
for $C$ large enough (assuming $k$ is large),
where on the fourth line we used that
$j/k-p \geq 0$ for $j/k \in J_1 = \left[p_0 + C \sqrt{\frac{\log k}{k}}, 1\right]$.
We have thus established $J_1 \subseteq J_{\leq 1}$
\end{proof}

Combining Claims~\ref{lemma:less-than-1} and~\ref{lemma:JJ1}, 
we thus obtain:
\begin{claim}[High substitution: Hellinger distance]\label{claim:high-substitution}
	$$
	H^2(\P_0,\Q)|_{J_1} = O(f^2).
	$$
\end{claim}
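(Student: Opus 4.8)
The plan is to obtain the claim directly from the two ingredients already in hand: the inclusion $J_1 \subseteq J_{\leq 1}$ and the bound $H^2(\P_0,\Q)|_{J_{\leq 1}} = O(f^2)$ supplied by Lemma~\ref{lemma:less-than-1}. The only remaining observation is that the Hellinger sum is monotone under shrinking the index set. First I would note that each summand in the expression for $H^2(\P_0,\Q)|_J$ has the form $h_{\sigma_f}(s_j)\,\P_0[\theta = j]$, where $s_j = \E_1[X^j(1-X)^{k-j}]/\E_0[X^j(1-X)^{k-j}]$ and $h_b(s) = (\sqrt{1+b(s-1)}-1)^2$. By Lemma~\ref{lemma:h2} this function is nonnegative, and $\P_0[\theta=j] \geq 0$, so every term in the sum is nonnegative.

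Given nonnegativity, restricting the summation to a smaller collection of indices can only decrease the total. Since the chain of inequalities established just above the claim --- combining the upper bound~\eqref{eq:j1e1} on $\E_1[X^j(1-X)^{k-j}]$, the lower bound~\eqref{eq:j1e0} on $\E_0[X^j(1-X)^{k-j}]$ coming from Assumption A2, and the quadratic lower bound of Lemma~\ref{lemma:distance} --- shows that every $j$ with $j/k \in J_1$ satisfies $s_j \leq 1$ and hence lies in $J_{\leq 1}$, the index set of $H^2(\P_0,\Q)|_{J_1}$ is contained in that of $H^2(\P_0,\Q)|_{J_{\leq 1}}$. Therefore
$$
H^2(\P_0,\Q)|_{J_1} \leq H^2(\P_0,\Q)|_{J_{\leq 1}} = O(f^2),
$$
where the final equality is Lemma~\ref{lemma:less-than-1}. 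This is exactly the claim.

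I expect the genuine difficulty to lie not in this last deduction but in the verification of the inclusion $J_1 \subseteq J_{\leq 1}$ carried out beforehand. The crux there is to choose the constant $C$ in the definition of $J_1$ large enough that the factor $\exp(\tfrac{C^2}{5}\log k) = k^{C^2/5}$ produced by Lemma~\ref{lemma:distance} makes the product $\tfrac{\rho}{k}\,k^{C^2/5}$ at least $1$, thereby pushing the ratio $\E_0[X^j(1-X)^{k-j}]/\E_1[X^j(1-X)^{k-j}]$ above $1$ uniformly over $J_1$. Once $C$ is fixed in this way the monotonicity argument above is immediate, and the only care required is that the Taylor remainder in Lemma~\ref{lemma:distance} contributes the favorable sign, which holds because $j/k - p \geq 0$ throughout $J_1$.
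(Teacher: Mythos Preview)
Your proposal is correct and follows exactly the paper's approach: the claim is deduced immediately from the inclusion $J_1 \subseteq J_{\leq 1}$ established in the preceding paragraph together with Lemma~\ref{lemma:less-than-1}, using only the nonnegativity of the summands. Your closing paragraph accurately identifies that the substantive work lies in verifying the inclusion via~\eqref{eq:j1e1},~\eqref{eq:j1e0}, and Lemma~\ref{lemma:distance}, with $C$ chosen large enough; the claim itself is then a one-line consequence.
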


\paragraph{Low substitution regime}
In order to estimate the sum in $J'_0\cup J'_{1}\cup J'_{2}$ we need to further subdivide it into intervals of doubling length. 
The basic intuition is that for far enough intervals the null probabilities $\P_0[\theta = j]$  are small enough so we can estimate the likelihood ratio term 
$\frac{ \E_1[X^j (1 - X)^{k-j}]}{ \E_0[X^j (1 - X)^{k-j}]}$ by its worst value in the interval. However, when the intervals are close to the mean, the fluctuations in 
$\frac{ \E_1[X^j (1 - X)^{k-j}]}{ \E_0[X^j (1 - X)^{k-j}]}$ are too big so we need to work with shorter intervals. The partition is defined as follows:

\begin{eqnarray*}
	I'_0 &=& \left[p_0 - \frac{1}{\sqrt{k}}, p_0\right]\\
	I'_\ell &=& \left[p_0 - \frac{2^{\ell}}{\sqrt{k}}, p_0 - \frac{2^{\ell-1}}{\sqrt{k}}\right],\qquad \ell \geq 1.
\end{eqnarray*}
Define $L$ by $2^L = C\sqrt{\log k}$ (where we may choose $C$ so 
that it is integer-valued).

We first upper bound $\E_1[X^j (1 - X)^{k-j}]$ using
Lemma~\ref{lemma:entropy}
and Assumption A1:
\begin{itemize}
	\item On $J'_0$,
	\begin{eqnarray}
	\E_1[X^j (1 - X)^{k-j}] 
	&=& \E_1[\exp(k \Phi_j(X))]\nonumber\\ 
	&\leq& \E_1[\exp(k \Phi_j(j/k))]\nonumber\\ 
	&=& (j/k)^j(1-j/k)^{k-j}.\label{eq:jp0e1}
	\end{eqnarray}
	
	\item On $J'_1 \cup J'_2$ we have that $X \geq p_0 - \phi_f$ a.s. and therefore  
	\begin{equation}\label{eq:jp12e1}
	\E_1[X^j (1 - X)^{k-j}] \leq (p_0-\phi_f)^j(1-p_0+\phi_f)^{k-j}.
	\end{equation}
	
\end{itemize}
To lower bound $\E_0[X^j (1 - X)^{k-j}]$,
we consider the event
$$
\ecal = \left\{X \in \left[p_0,
p_0 + \sqrt{\frac{1}{k}}\right]\right\}.
$$
By Assumption A2 and Lemma~\ref{lemma:entropy},
on $J'_0 \cup J'_1 \cup J'_2$, arguing as in~\eqref{eq:j1e0},
\begin{eqnarray}
\E_0[X^j (1 - X)^{k-j}]
&\geq& \frac{\rho}{\sqrt{k}} p^j(1-p)^{k-j},\label{eq:jp012e0}
\end{eqnarray}
where $p = p_0 + \sqrt{\frac{1}{k}}$ (assuming $k$ is large).
Combining~\eqref{eq:jp0e1},~\eqref{eq:jp12e1},
and~\eqref{eq:jp012e0}, and using Lemma~\ref{lemma:distance}:
\begin{itemize}
	\item On $J'_0$,{\small
	\begin{eqnarray*}
		\frac{\E_0[X^j (1 - X)^{k-j}]}{\E_1[X^j (1 - X)^{k-j}]} 
		&\geq& \frac{\rho}{\sqrt{k}}
		\exp(k \Psi_{j,p}(p-j/k))\\
		&\geq& \frac{\rho}{\sqrt{k}}
		\exp\left(k\left(-\frac{1}{p(1-p)}\left(\frac{j}{k} - p\right)^2 
		+ \frac{1}{4} \left(\frac{j}{k} - p\right)^2\right)\right)\\
		&\geq& C'_1 \frac{\rho}{\sqrt{k}},
	\end{eqnarray*}}
	\!for some constant $C'_1$ (not depending on $f$), 
	where we used that $\phi_f \ll \sqrt{1/k}$ so that $\left(\frac{j}{k} - p\right)^2 = O(1/k)$ and, further,
	$p(1-p)\in (0,1/4)$.
	
	\item On $J'_1 \cup J'_2$,{\small
	\begin{eqnarray*}
		&&\frac{\E_0[X^j (1 - X)^{k-j}]}{\E_1[X^j (1 - X)^{k-j}]}\\ 
		&&\quad \geq \frac{\rho}{\sqrt{k}}
		\exp(k \Psi_{j,p}(p-p_0+\phi_f))\\
		&&\quad \geq \frac{\rho}{\sqrt{k}}
		\exp\left(k\left(\frac{1}{p(1-p)}\left(\frac{j}{k} - p\right)(p-p_0+\phi_f)
		+ \frac{1}{4} (p-p_0+\phi_f)^2\right)\right)\\
		&&\quad \geq \frac{\rho}{\sqrt{k}}
		\exp\left(-C_1 \sqrt{k}\left(p - \frac{j}{k}\right)\right)\\
		&&\quad = \frac{\rho}{\sqrt{k}}
		\exp\left(-C_1 \sqrt{k}\left(p_0 + \sqrt{\frac{1}{k}} - \frac{j}{k}\right)\right)\\		
		&&\quad = C_2 \frac{\rho}{\sqrt{k}}
		\exp\left(-C_1 \sqrt{k}\left(p_0-\frac{j}{k}\right)\right),
	\end{eqnarray*}}
	\!for some constants $C_1, C_2$ (not depending on $f$), 
	where again we used that $\phi_f \ll \sqrt{1/k}$
	so that $(p-p_0+\phi_f)^2 = O(1/k^2)$.
	
\end{itemize} 
By decreasing $C_2$
appropriately we combine the two bounds into:
\begin{claim}[Low substitution: Likelihood ratio]
	\label{claim:low-sub-ratio}
	For all $j/k \in J'_0 \cup J'_1 \cup J'_2$,
	\begin{equation}\label{eq:jpratio}
	\frac{\E_1[X^j (1 - X)^{k-j}]}{\E_0[X^j (1 - X)^{k-j}]} 
	\leq  \frac{\sqrt{k}}{C_2}
	\exp\left(C_1 \sqrt{k}\left(p_0-\frac{j}{k}\right)\right).
	\end{equation}
\end{claim}

We now bound the integrand in $H^2(\P_0,\Q)$
over $J'_0 \cup J'_1 \cup J'_2$.
As noted after the definition of $J_{\leq 1}$ in equation~(\ref{eq:defJle1}), Lemma~\ref{lemma:h2}  
implies that on $J_{\leq 1}$
\begin{equation}\label{eq:on-j-less-than-1}
h_{\sigma_f}\left(
\frac{\E_1[X^j (1 - X)^{k-j}]}{\E_0[X^j (1 - X)^{k-j}]}
\right) \leq C_0 f^2,
\end{equation}
for some constant $C_0 > 0$.
\begin{itemize}
	\item On $J'_0 \cup J'_1$, 
	we will further use Lemma~\ref{lemma:h2} (Part 2) which, recall, says that for
	$s \geq 1$ and $b > 0$
	$$
	h_b(s) 
	\leq [b(s-1)] \land [b(s-1)]^2
	\leq [bs] \land [bs]^2.
	$$
	In particular observe that, if $s \geq 1$, $b > 0$ and $bs < 1$, then 
	we have simply $h_b(s) 
	\leq [bs]^2$.
	Here $b = \sigma_f$ and $s$ is bounded 
	above by the expression in~\eqref{eq:jpratio}.
	We show first that $bs$ is therefore small. Indeed,
	$$
	\sigma_f \frac{\sqrt{k}}{C_2}
	\exp\left(C_1 \sqrt{k}\left(p_0-\frac{j}{k}\right)\right)
	= O(f^\kappa) \exp\left(O(\sqrt{\log f^{-1}})\right)
	= o(1).
	$$
	Hence, for those $j/k$-values where the likelihood
	ratio is bounded below by $1$, we have by Lemma~\ref{lemma:h2} (Part 2) that
	$$
	h_{\sigma_f}\left(
	\frac{\E_1[X^j (1 - X)^{k-j}]}{\E_0[X^j (1 - X)^{k-j}]}
	\right) \leq 
	\frac{\sigma^2_f  k}{C_2^2}
	\exp\left(2C_1 \sqrt{k}\left(p_0-\frac{j}{k}\right)\right).
	$$
	For those $j/k$-values where the likelihood
	ratio is {\bf not} bounded below by $1$, we instead use~\eqref{eq:on-j-less-than-1}.
	Changing the constants we obtain finally the following
	bound valid on all of on $J'_0 \cup J'_1$:
	\begin{equation}\label{eq:using-lemma32}
	h_{\sigma_f}\left(
	\frac{\E_1[X^j (1 - X)^{k-j}]}{\E_0[X^j (1 - X)^{k-j}]}
	\right) \leq 
	C_2 f^2  k
	\exp\left(C_1 \sqrt{k}\left(p_0-\frac{j}{k}\right)\right).
	\end{equation}
	
	\item On $J'_2$, arguing as in the previous case, we note that
	the likelihood ratio multiplied by $\sigma_f$ may
	be larger than $1$ this time. Therefore by Lemma~\ref{lemma:h2} (Part 2) and~\eqref{eq:on-j-less-than-1}
	we have{\footnotesize
	\begin{eqnarray*}
		&&h_{\sigma_f}\left(
		\frac{\E_1[X^j (1 - X)^{k-j}]}{\E_0[X^j (1 - X)^{k-j}]}\right)\\
		&&\ \ \leq C_0 f^2\lor \left\{
		\left[\sigma_f \frac{\sqrt{k}}{C_2}
		\exp\left(C_1 \sqrt{k}\left(p_0-\frac{j}{k}\right)\right)\right]
		\land 
		\left[\sigma_f \frac{\sqrt{k}}{C_2}
		\exp\left(C_1 \sqrt{k}\left(p_0-\frac{j}{k}\right)\right)\right]^2\right\},\qquad\qquad
	\end{eqnarray*}}
	\!Changing the constants we re-write this expression 
	as
	$$
	h_{\sigma_f}\left(
	\frac{\E_1[X^j (1 - X)^{k-j}]}{\E_0[X^j (1 - X)^{k-j}]}
	\right) \leq 
	C_2 f  \sqrt{k}
	\exp\left(C_1 \sqrt{k}\left(p_0-\frac{j}{k}\right)\right),
	$$
	where, to upper bound the minimum in square brackets above, we only squared the exponential (which is larger than $1$) and used the fact that
	$f\sqrt{k} = o(1)$ (which implies that the term
	$\sigma_f \frac{\sqrt{k}}{C_2}$ is on the other hand asymptotically
	smaller than $1$). We also used that $f^2 < f \sqrt{k}$ to deal with the maximum above.

\end{itemize}
We combine the two bounds into:
\begin{claim}[Low substitution: Integrand]
	\label{claim:low-sub-int}
	For all $j/k \in J'_0 \cup J'_1 \cup J'_2$,{\small
	\begin{equation}\label{eq:jpintegrand}
	h_{\sigma_f}\left(
	\frac{\E_1[X^j (1 - X)^{k-j}]}{\E_0[X^j (1 - X)^{k-j}]}
	\right) \leq 
	C_2 (f^2  k \ind_{j/k \in J'_0 \cup J'_1} + f  \sqrt{k} \ind_{j/k \in J'_2})
	\exp\left(C_1 \sqrt{k}\left(p_0-\frac{j}{k}\right)\right).
	\end{equation}}
\end{claim}

It remains to bound the integrator, for which
we rely on Chernoff's bound. 
We let
\begin{eqnarray*}
	I_0 &=& \left[p_0, p_0 + \frac{1}{\sqrt{k}}\right]\\
	I_\ell &=& \left[p_0 + \frac{2^{\ell-1}}{\sqrt{k}}, p_0 + \frac{2^{\ell}}{\sqrt{k}}\right],\qquad \ell \geq 1.
\end{eqnarray*}
Let $\Lambda > 0$ be such that $2^\Lambda = (\bar{p}-p_0)\sqrt{k}$.
Then by Assumption A2
\begin{eqnarray*}
	\P_0[\theta/k \in I'_\ell] 
	&=& \sum_{\lambda \geq 0} \P_0[\theta/k \in I'_\ell\,|\,X \in I_\lambda]\,
	\P_0[X \in I_\lambda]\\
	&\leq& \sum_{\lambda \geq 0} \P_0[\theta/k \in I'_\ell\,|\,X \in I_\lambda]
	\frac{2^{\lambda-1}}{\sqrt{k}}\rho^{-1}
	+ \sum_{\lambda > \Lambda} \P_0[\theta/k \in I'_\ell\,|\,X \in I_\lambda].\end{eqnarray*}
By Chernoff's bound
$$
\P_0[\theta/k \in I'_\ell\,|\,X \in I_\lambda]
\leq \exp\left(
-2(2^{\ell-1} + 2^{\lambda-1})^2
\right) 
\leq 
\exp\left(
- 2^{2\ell-1} - 2^{2\lambda-1}
\right).
$$
In particular
\begin{eqnarray*}
	\sum_{\lambda > \Lambda} \P_0[\theta/k \in I'_\ell\,|\,X \in I_\lambda]
	&\leq& \exp\left(
	- 2^{2\ell-1}
	\right) \sum_{\lambda > \Lambda} \exp\left(
	- 2^{2\lambda-1}
	\right)\\
	&\leq& \exp\left(
	- 2^{2\ell-1}
	\right) \exp\left(
	- C'_3 k
	\right),
\end{eqnarray*}
for some constant $C'_3 > 0$ (not depending on $f$).
On the other hand,
\begin{eqnarray*}
	\sum_{\lambda \geq 0} \P_0[\theta/k \in I'_\ell\,|\,X \in I_\lambda]
	\frac{2^{\lambda-1}}{\sqrt{k}}\rho^{-1}
	&\leq& 
	\frac{\exp\left(
		- 2^{2\ell-1}
		\right)}{\rho \sqrt{k}} 
	\sum_{\lambda \geq 0} 2^{\lambda-1}
	\exp\left(
	- 2^{2\lambda-1}
	\right)\\
	&\leq& 
	\frac{C_3 \exp\left(
		- 2^{2\ell-1}
		\right)}{\sqrt{k}},
\end{eqnarray*}
for a constant $C_3 > 0$ (not depending on $f$).
Combining the bounds and increasing $C_3$
appropriately, we get
\begin{claim}[Low substitution: Integrator]
	\label{claim:low-sub-int2}
	For all $\ell \geq 0$,
	\begin{equation}\label{eq:jpintegrator}
	\P_0[\theta/k \in I'_\ell] 
	\leq \frac{C_3 \exp\left(
		- 2^{2\ell-1}
		\right)}{\sqrt{k}}.
	\end{equation}
\end{claim}

We can now compute the contribution of $J'_0 \cup J'_1 \cup J'_2$ to the Hellinger distance.
Recall that $L$ is defined by $2^L = C\sqrt{\log k}$.
From~\eqref{eq:jpintegrand} and~\eqref{eq:jpintegrator},
we get:
\begin{itemize}
	\item For $0 \leq \ell \leq L$,
	\begin{eqnarray*}
		H^2(\P_0,\Q)|_{I'_\ell}
		&\leq& 
		C_2 f^2  k
		\exp\left(C_1 \sqrt{k}\left(\frac{2^\ell}{\sqrt{k}}\right)\right)
		\frac{C_3 \exp\left(
			- 2^{2\ell-1}
			\right)}{\sqrt{k}}\\
		&\leq& 
		C_2 C_3 f^2  \sqrt{k}
		\exp\left(- 2^{2\ell-1} + C_1 2^\ell \right)\\
		&\leq& 
		C_5 f^2  \sqrt{k}
		\exp\left(- C_4 2^{2\ell} \right),
	\end{eqnarray*}
	for some constants $C_4, C_5 > 0$.
	Summing over $\ell$ we get
	$$
	\sum_{\ell = 0}^{L} 
	H^2(\P_0,\Q)|_{I'_\ell}
	\leq C_6 f^2 \sqrt{k},
	$$
	for some constant $C_6 > 0$.
	
	\item Similarly, for $\ell > L$,
	\begin{eqnarray*}
		H^2(\P_0,\Q)|_{I'_\ell}
		&\leq& 
		C_5 f
		\exp\left(- C_4 2^{2\ell} \right),
	\end{eqnarray*}
	adapting constants $C_4, C_5 > 0$.
	Summing over $\ell$ we get
	\begin{equation}\label{eq:small-thetas}
	\sum_{\ell > L} 
	H^2(\P_0,\Q)|_{I'_\ell}
	\leq C_8 f \exp\left( - C_7 C^2 \log k\right)
	= o(f^{1+\kappa}) = o(f^2 \sqrt{k}),
	\end{equation}
	by choosing $C$ large enough.
	
\end{itemize}
Combining these bounds we get finally:
\begin{claim}[Low substitution: Hellinger distance]\label{claim:low-substitution}
	$$
	H^2(\P_0,\Q)|_{J'_0 \cup J'_1 \cup J'_2} = O(f^2 \sqrt{k}).
	$$
\end{claim}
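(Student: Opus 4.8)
The plan is to bound $H^2(\P_0,\Q)|_{J'_0 \cup J'_1 \cup J'_2}$ scale by scale over the dyadic intervals $I'_\ell$ that partition $J'_0 \cup J'_1 \cup J'_2$, pairing the pointwise integrand estimate~\eqref{eq:jpintegrand} with the interval‑probability estimate~\eqref{eq:jpintegrator} and then summing a rapidly decaying series in $\ell$. The decisive observation is that on $I'_\ell$ one has $p_0 - \frac{j}{k} \le \frac{2^\ell}{\sqrt{k}}$, so the growing exponential $\exp(C_1\sqrt{k}(p_0 - \frac{j}{k}))$ appearing in~\eqref{eq:jpintegrand} is at most $\exp(C_1 2^\ell)$, while~\eqref{eq:jpintegrator} supplies a Gaussian‑type factor $\exp(-2^{2\ell-1})$. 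Their product is therefore controlled by $\exp(-2^{2\ell-1} + C_1 2^\ell) \le \exp(-C_4 2^{2\ell})$ for a suitable $C_4 > 0$, since the quadratic exponent dominates the linear one.

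Next I would split the sum at the threshold $L$ defined by $2^L = C\sqrt{\log k}$, which is exactly the dyadic scale at which $p_0 - \frac{j}{k}$ crosses $C\sqrt{\frac{\log k}{k}}$, i.e.\ the boundary separating $J'_0 \cup J'_1$ from $J'_2$. For $0 \le \ell \le L$ the integrand carries the prefactor $f^2 k$, so
$$
H^2(\P_0,\Q)|_{I'_\ell}
\le C_2 f^2 k\,\exp(C_1 2^\ell)\,\frac{C_3\exp(-2^{2\ell-1})}{\sqrt{k}}
\le C_5 f^2\sqrt{k}\,\exp(-C_4 2^{2\ell}),
$$
and since $\sum_{\ell\ge 0}\exp(-C_4 2^{2\ell})$ is a convergent series whose value is independent of $f$, summing over $0 \le \ell \le L$ gives $O(f^2\sqrt{k})$. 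For $\ell > L$ the integrand instead carries the prefactor $f\sqrt{k}$, which after cancelling the $\sqrt{k}$ from~\eqref{eq:jpintegrator} leaves $H^2(\P_0,\Q)|_{I'_\ell} \le C_5 f\,\exp(-C_4 2^{2\ell})$.

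The tail sum over $\ell > L$ is then dominated by its first term, so using $2^{2L} = C^2\log k$ it is $O(f\exp(-C_4 C^2\log k)) = O(f\,k^{-C_4 C^2})$; recalling that $k = f^{-2+2\kappa}$ and hence $f^2\sqrt{k} = f^{1+\kappa}$, this is $o(f^{1+\kappa}) = o(f^2\sqrt{k})$ provided $C$ is fixed large enough that $(2-2\kappa)C_4 C^2 > \kappa$. Adding the two ranges of $\ell$ yields the claim. The only genuinely delicate point is this final calibration of $C$: it must be taken large enough that the polynomial‑in‑$k$ decay $k^{-C_4 C^2}$ of the $J'_2$ tail beats the $f^\kappa$ deficit, the key mechanism being that $2^{2L}=C^2\log k$ converts the super‑exponential decay in $\ell$ into polynomial decay in $k$. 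I would stress that enlarging $C$ costs nothing in the main sum, since that series converges uniformly in $L$, so the two requirements on $C$ do not conflict.
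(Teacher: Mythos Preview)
Your proposal is correct and follows essentially the same approach as the paper: a dyadic decomposition into the $I'_\ell$, pairing the integrand bound~\eqref{eq:jpintegrand} with the integrator bound~\eqref{eq:jpintegrator}, splitting the sum at $\ell = L$ according to which prefactor ($f^2 k$ versus $f\sqrt{k}$) is active, and absorbing the tail via the choice of $C$. Your explicit calibration $(2-2\kappa)C_4 C^2 > \kappa$ and the remark that enlarging $C$ does not affect the main sum are slightly more detailed than the paper's ``by choosing $C$ large enough,'' but the argument is the same.
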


\paragraph{Border regime.}
We now consider $J_0$, i.e., the bulk regime above $p_0$.
The high-level argument is similar to the
case of $J_0'\cup J_1'$ above, although some details
differ.
We first bound $\E_1[X^j (1 - X)^{k-j}]$
using Lemma~\ref{lemma:entropy}
and Assumption A1
\begin{equation}\label{eq:j0e1}
\E_1[X^j (1 - X)^{k-j}] \leq p_0^j(1-p_0)^{k-j}.
\end{equation}
To bound $\E_0[X^j (1 - X)^{k-j}]$,
we consider the event
$$
\ecal = \left\{X \in \left[\frac{j}{k},
\frac{j}{k} + \sqrt{\frac{1}{k}}\right]\right\}.
$$
By Assumption A2 and Lemma~\ref{lemma:entropy},
on $J_0$, arguing as in~\eqref{eq:j1e0},
\begin{eqnarray}
\E_0[X^j (1 - X)^{k-j}]
&\geq& \frac{\rho}{\sqrt{k}} p^j(1-p)^{k-j},\label{eq:j0e0}
\end{eqnarray}
where $p = \frac{j}{k} + \sqrt{\frac{1}{k}}$.
Combining~\eqref{eq:j0e1}
and~\eqref{eq:j0e0}, and using Lemma~\ref{lemma:distance},
on $J_0$,{\small
\begin{eqnarray*}
	\frac{\E_0[X^j (1 - X)^{k-j}]}{\E_1[X^j (1 - X)^{k-j}]} 
	&\geq& \frac{\rho}{\sqrt{k}}
	\exp(k \Psi_{j,p}(p-p_0))\\
	&\geq& \frac{\rho}{\sqrt{k}}
	\exp\left(k\left(\frac{1}{p(1-p)}\left(\frac{j}{k} - p\right)(p - p_0) 
	+ \frac{1}{4} (p-p_0)^2\right)\right).
\end{eqnarray*}}
\!For $j/k \in I_\ell$, $\ell \leq L$,{\small
\begin{eqnarray*}
	\frac{\E_0[X^j (1 - X)^{k-j}]}{\E_1[X^j (1 - X)^{k-j}]} 
	&\geq& \frac{\rho}{\sqrt{k}}
	\exp\left(k\left(-\frac{1}{p(1-p)}\sqrt{\frac{1}{k}}\left(\frac{2^\ell + 1}{\sqrt{k}}\right) 
	+ \frac{1}{4} \left(\frac{2^{\ell-1} + 1}{\sqrt{k}}\right)^2\right)\right)\\
	&\geq& C_2 \frac{1}{\sqrt{k}}
	\exp\left(C_1 2^{2\ell}\right),
\end{eqnarray*}}
\!for constants $C_1, C_2 > 0$.
\begin{claim}[Border regime: Likelihood ratio]
	\label{claim:border-ratio}
	For all $j/k \in I_\ell$, $\ell \leq L$,
	\begin{equation}\label{eq:jratio}
	\frac{\E_1[X^j (1 - X)^{k-j}]}{\E_0[X^j (1 - X)^{k-j}]} 
	\leq  \frac{\sqrt{k}}{C_2}
	\exp\left(- C_1 2^{2\ell}\right).
	\end{equation}
\end{claim}

We now bound the integrand in $H^2(\P_0,\Q)$.
We follow the argument leading up to~\eqref{eq:using-lemma32}.
Because
$
\sigma_f \sqrt{k} = o(1),
$
by Lemma~\ref{lemma:h2} (Part 2) and~\eqref{eq:on-j-less-than-1}
we have on $I_\ell$
$$
h_{\sigma_f}\left(
\frac{\E_1[X^j (1 - X)^{k-j}]}{\E_0[X^j (1 - X)^{k-j}]}
\right) \leq C_0 f^2\lor 
\frac{\sigma^2_f  k}{C_2^2}
\exp\left(-2C_1 2^{2\ell}\right).
$$
Changing the constants we re-write this expression 
as
$$
h_{\sigma_f}\left(
\frac{\E_1[X^j (1 - X)^{k-j}]}{\E_0[X^j (1 - X)^{k-j}]}
\right) \leq C_0 f^2\lor 
C_2 f^2  k
\exp\left(- C_1 2^{2\ell}\right).
$$
\begin{claim}[Border regime: Integrand]
	\label{claim:border-int}
	For all $j/k \in I_\ell$, $0 \leq \ell \leq L$,
	\begin{equation}\label{eq:jintegrand}
	h_{\sigma_f}\left(
	\frac{\E_1[X^j (1 - X)^{k-j}]}{\E_0[X^j (1 - X)^{k-j}]}
	\right) \leq C_0 f^2\lor 
	C_2 f^2  k
	\exp\left(- C_1 2^{2\ell}\right).
	\end{equation}
\end{claim}

It remains to bound the integrator. 
We have by Assumption A2 (recall that $2^\Lambda = (\bar{p}-p_0)\sqrt{k}$)
\begin{eqnarray*}
	\P_0[\theta/k \in I_\ell] 
	&=& \sum_{\lambda \geq 0} \P_0[\theta/k \in I_\ell\,|\,X \in I_\lambda]
	\P_0[X \in I_\lambda]\\
	&\leq& \sum_{0 \leq \lambda \leq \ell} \P_0[X \in I_\lambda] + \sum_{\ell < \lambda \leq \Lambda} \P_0[\theta/k \in I_\ell\,|\,X \in I_\lambda]\P_0[X \in I_\lambda]\\
	&& \qquad+ \sum_{\lambda > \Lambda} \P_0[\theta/k \in I_\ell\,|\,X \in I_\lambda]\\
	&\leq& \frac{2^\ell}{\sqrt{k}}\rho^{-1} + \sum_{\ell < \lambda \leq \Lambda} \P_0[\theta/k \in I_\ell\,|\,X \in I_\lambda]
	\frac{2^{\lambda-1}}{\sqrt{k}}\rho^{-1}\\
	&& \qquad+ \sum_{\lambda > \Lambda} \P_0[\theta/k \in I_\ell\,|\,X \in I_\lambda].
\end{eqnarray*}
By Chernoff's bound, for $\lambda > \ell$,
$$
\P_0[\theta/k \in I_\ell\,|\,X \in I_\lambda]
\leq \exp\left(
-2(-2^{\ell} + 2^{\lambda-1})^2
\right) 
\leq 
\exp\left(-2^{2\ell + 1}(2^{\lambda - \ell - 1} - 1)^2
\right).
$$
In particular
\begin{eqnarray*}
	\sum_{\lambda > \Lambda} \P_0[\theta/k \in I_\ell\,|\,X \in I_\lambda] 
	&\leq& \exp\left(
	- C'_3 k
	\right),
\end{eqnarray*}
for some constant $C'_3 > 0$ (not depending on $f$).
On the other hand,
\begin{eqnarray*}
	\sum_{\ell < \lambda \leq \Lambda} \P_0[\theta/k \in I_\ell\,|\,X \in I_\lambda]
	\frac{2^{\lambda-1}}{\sqrt{k}}\rho^{-1}
	&\leq& 
	\frac{C_3 2^{\ell}}{\sqrt{k}},
\end{eqnarray*}
for a constant $C_3 > 0$ (not depending on $f$).
Combining the bounds and increasing $C_3$
appropriately, we get
\begin{claim}[Border substitution: Integrator]
	\label{claim:border-int2}
	For all $0 \leq \ell < L$,
	\begin{equation}\label{eq:jintegrator}
	\P_0[\theta/k \in I_\ell] 
	\leq \frac{C_3 2^{\ell}}{\sqrt{k}}.
	\end{equation}
\end{claim}

We can now compute the contribution of $J_0$ to the Hellinger distance.
From~\eqref{eq:jintegrand} and~\eqref{eq:jintegrator},
we get for $0 \leq \ell \leq L$
\begin{eqnarray*}
	H^2(\P_0,\Q)|_{I_\ell}
	&\leq& \left[C_0 f^2\lor 
	C_2 f^2  k
	\exp\left(-C_1 2^{2\ell}\right)\right]
	\frac{C_3 2^{\ell}}{\sqrt{k}}.
\end{eqnarray*}
Summing over $\ell$ we get
$$
\sum_{\ell = 0}^{L} 
H^2(\P_0,\Q)|_{I_\ell}
\leq C_4 f^2 \sqrt{k},
$$
for some constant $C_4 > 0$.
\begin{claim}[Border regime: Hellinger distance]\label{claim:border-regime}
	$$
	H^2(\P_0,\Q)|_{J_0} = O(f^2 \sqrt{k}).
	$$
\end{claim}

\paragraph{Wrapping up}
We now prove Proposition~\ref{prop:main1}. 

\begin{proof}[Proof of Proposition~\ref{prop:main1}]
\[
H^2(\P_0,\Q) \leq H^2_{J_1}(\P_0,\Q)+ H^2_{J_0' \cup J_1' \cup J_2'}(\P_0,\Q) + H^2_{J_0}(\P_0,\Q)  \leq O(f^2 \sqrt{k}),
\]
by Claims~\ref{claim:high-substitution},~\ref{claim:low-substitution} and~\ref{claim:border-regime}. That implies Proposition~\ref{prop:main1}.
\end{proof}

\section{Matching upper bound}
\label{section:upper}

We give two proofs of the upper bound. 

\subsection{Proof of Theorem~\ref{thm:main2}}

\begin{proof}
	We use~\eqref{eq:testing-error} and construct an explicit 
	test $A$ as follows:
	\begin{itemize}
		\item Let $W$ be the number of genes such that
		$\theta/k \leq p_0$. Let
		$w = \P_0[\theta/k \leq p_0]$, $w' = \Q[\theta/k \leq p_0]$ and
		$$
		w^* 
		= m\frac{w + w'}{2}
		= mw + \frac{m}{2}(w' - w) 
		= mw' - \frac{m}{2}(w' - w).
		$$
		We consider the following event
		$$
		A = \{W \geq w^*\}.
		$$
	\end{itemize}
	\noindent It remains to show that the event $A$ is highly unlikely under $\P_0^{\otimes m}$ while
	being highly likely under $\Q^{\otimes m}$.
	We do this by bounding the difference $w' - w$
	and applying Chebyshev's inequality to $W$.
	
	Note that $W \sim \mathrm{Bin}(m,w)$ under $\P_0^{\otimes m}$ and
	$W \sim \mathrm{Bin}(m,w')$ under $\Q^{\otimes m}$. 
	By Assumption A1, $X \in [p_0 - \phi_f,p_0]$ under $\P_1$. By the Berry-Esseen
	theorem (e.g.~\cite{Durrett:96}),
	\begin{eqnarray}
	\P_1[\theta/k \leq p_0] \geq \E_1[\P_1[\theta \leq kX\,|\,X]] = \frac{1}{2} - O\left(\frac{1}{\sqrt{k}}\right)
	\geq \frac{1}{3},\label{eq:berry}
	\end{eqnarray}
	for $k$ large enough.
	Hence,	
	\begin{eqnarray}
	w' &=& \sigma_f \P_1[\theta/k \leq p_0] + (1 - \sigma_f) w\nonumber\\
	&\geq& \frac{1}{3} \sigma_f + (1- \sigma_f)w, \label{eq:wp1}
	\end{eqnarray}
	whereas by the computations in the previous
	section (more specifically, by summing over $\ell$ in~\eqref{eq:jpintegrator})
	\begin{equation}\label{eq:w}
	w = O\left(\frac{1}{\sqrt{k}}\right),
	\end{equation}
	and, similarly, since $f \sqrt{k} = o(1)$
	\begin{equation}\label{eq:wp2}
	w' = O\left(\frac{1}{\sqrt{k}}\right),
	\end{equation}
	from~\eqref{eq:wp1} and~\eqref{eq:w}.
	Consequently
	\begin{equation}\label{eq:gap}
	w' - w \geq \sigma_f \left(\frac{1}{3} - w\right) = \Omega(f).
	\end{equation}
	By Chebyshev's inequality,
	$$
	\P_0^{\otimes m}\left[A\right]
	\leq \frac{4m w (1 - w)}{m^2 (w' - w)^2}
	= O\left(\frac{1}{m f^2 \sqrt{k}}\right)
	\leq \frac{\delta}{2},
	$$
	for $c'$ large enough, where we used~\eqref{eq:w}
	and~\eqref{eq:gap}.
	Similarly, 
	$$
	\Q^{\otimes m}\left[A^c\right]
	\leq \frac{4m w' (1 - w')}{m^2 (w' - w)^2}
	\leq \frac{\delta}{2}.
	$$
\end{proof}

\subsection{Agnostic version}

Although Theorem~\ref{thm:main2} shows that our
bound in Theorem~\ref{thm:main1} is tight,
it relies on a test (i.e., the set $A$) that assumes knowledge of the
null and alternative hypotheses. Here we relax this assumption.

\paragraph{Pairwise distance comparisons}
We assume that we have two (independent) collections of
genes, $\tcal_1$ and
$\tcal_2$, one from each model, $\P_0$ and $\Q$
as in the previous section. 
We split the genes into two
equal-sized disjoint sub-collections, 
$(\tcal_1^1, \tcal_1^2)$ and
$(\tcal_2^1, \tcal_2^2)$. 
Assume for convenience that the total
number of genes is in fact $2m$ for each dataset. 
Let $C > 0$
be a constant, to be determined later (in equation~(\ref{eq:defC})). 
We proceed in two steps. 
\begin{enumerate}
	\item We first compute $\hat{p}_1$ and $\hat{p}_2$, 
	the $\frac{C}{\sqrt{k}}$-quantiles
	based on $\tcal_1^1$ and $\tcal_2^1$ respectively.
	Let $\hat{p} = \max\{\hat{p}_1,\hat{p}_2\}$. 
	
	\item Compute the fraction of genes, $\hat{w}_1$
	and $\hat{w}_2$, with $\theta/k \leq \hat{p}$ 
	in $\tcal_1^2$ and $\tcal_2^2$ respectively. 
	
\end{enumerate}
We infer that the first dataset comes from $\P_0^{\otimes 2m}$
if $\hat{w}_1 < \hat{w}_2$, and vice versa.
\begin{remark}
	Simply comparing the $\frac{C}{\sqrt{k}}$-quantiles 
	breaks down when
	$f \ll \frac{1}{k}$, 
	as it is quite possible that the quantiles will be identical since they can only take $k$ possible values. 
	However, even if the quantiles are identical, the probability of a gene being lower than the quantile is bigger if the distance is smaller. 
	This explains the need for the second phase in our algorithm. We remark further that the partition of the data into two sets is used for analysis purposes as it allows for better control of dependencies. 
\end{remark}

We show that this approach succeeds with probability
at least $1 - \delta$
whenever $m \geq c' \frac{1}{f^2 \sqrt{k}}$,
for $c'$ large enough. 
This proceeds from a series
of claims.
\begin{claim}[$\hat{p}$ is close to $p_0$]\label{claim:algo1}
	For $c'$ large enough,
	there is $C_1 > 0$ such that
	\begin{equation}\label{eq:quantile}
	\hat{p} \in \left[
	p_0, p_0 + \frac{C_1}{\sqrt{k}}
	\right]
	\end{equation}
	with probability $1 - \delta/2$.
\end{claim}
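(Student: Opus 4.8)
The plan is to show that both empirical quantiles $\hat{p}_1$ and $\hat{p}_2$ fall inside the window $[p_0, p_0 + C_1/\sqrt{k}]$ with high probability; since $\hat{p} = \max\{\hat{p}_1,\hat{p}_2\}$, Claim~\ref{claim:algo1} then follows by a union bound. I would first locate the \emph{population} $\frac{C}{\sqrt{k}}$-quantiles of $\P_0$ and $\Q$. Write $\alpha = C/\sqrt{k}$ and recall that the empirical quantile from $m$ samples is $\inf\{p : \hat{F}(p) \geq \alpha\}$, where $\hat{F}$ is the empirical CDF of $\theta/k$. I need two facts about the true CDFs $F_0(t) = \P_0[\theta/k \leq t]$ and $F_{\Q}(t) = \Q[\theta/k \leq t]$. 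On the one hand, by~\eqref{eq:w} we have $F_0(p_0) = w = O(1/\sqrt{k})$ and by~\eqref{eq:wp2} we have $F_{\Q}(p_0) = w' = O(1/\sqrt{k})$, so for $C$ larger than the implied constants the mass at or below $p_0$ is strictly below level $\alpha$. On the other hand, I need a matching lower bound at $p_0 + C_1/\sqrt{k}$: by Assumption~A2 the density of $X$ is at least $\rho$ just above $p_0$, so $\P_0[X \in [p_0, p_0 + \tfrac{C_1}{2\sqrt{k}}]] \geq \tfrac{\rho C_1}{2\sqrt{k}}$, and conditioned on such an $X$ a Berry--Esseen/CLT estimate of exactly the type used in~\eqref{eq:berry} gives a constant probability that $\theta/k \leq p_0 + C_1/\sqrt{k}$. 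Hence $F_0(p_0 + C_1/\sqrt{k}) \geq c_0 C_1/\sqrt{k}$ for some $c_0 > 0$, and the same holds for $\Q$ since $\Q \geq (1-\sigma_f)\P_0$ and $\sigma_f = O(f) \to 0$. Choosing $C_1$ large enough relative to $C$ makes this exceed $\alpha$, so both population quantiles lie strictly inside $(p_0, p_0 + C_1/\sqrt{k})$.

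Next I would transfer these statements to the empirical quantiles by concentration. For a fixed threshold $t$, the number of genes with $\theta/k \leq t$ is $\mathrm{Bin}(m, F(t))$ with mean $m F(t) = \Theta(m/\sqrt{k})$; under $m \geq c'/(f^2\sqrt{k})$ and $k = f^{-2+2\kappa}$ this mean is of order $c' f^{-2\kappa} \to \infty$, so Chebyshev's inequality shows $\hat{F}(t)$ concentrates around $F(t)$ with relative error $o(1)$, the failure probability being $O(\sqrt{k}/(m\,\varepsilon^2)) = O(f^{2\kappa}/(c'\varepsilon^2))$. Applying this at $t = p_0$ (where $F(t) < \alpha$) gives $\hat{F}(p_0^-) < \alpha$, so the empirical quantile exceeds $p_0$; applying it at $t = p_0 + C_1/\sqrt{k}$ (where $F(t) > \alpha$) gives $\hat{F}(t) \geq \alpha$, so the empirical quantile is at most $p_0 + C_1/\sqrt{k}$. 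For $c'$ large enough each of the four relevant events (two thresholds, two datasets $\tcal_1^1$ and $\tcal_2^1$) fails with probability at most $\delta/8$, and a union bound yields $\hat{p}_1,\hat{p}_2 \in [p_0, p_0 + C_1/\sqrt{k}]$, hence $\hat{p} = \max\{\hat{p}_1,\hat{p}_2\} \in [p_0, p_0 + C_1/\sqrt{k}]$, with probability at least $1 - \delta/2$.

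The main obstacle is the careful calibration of the two constants $C$ and $C_1$: the lower bound on the quantile forces $C$ to be large enough to dominate the $O(1/\sqrt{k})$ mass below $p_0$ supplied by~\eqref{eq:w} and~\eqref{eq:wp2}, while the upper bound forces $C_1$ to be large enough, \emph{given} $C$, that the accumulated mass in $[p_0, p_0 + C_1/\sqrt{k}]$ surpasses level $C/\sqrt{k}$. Establishing that accumulated-mass lower bound is the delicate step, since it requires combining the density lower bound of Assumption~A2 with the binomial spread of $\theta/k$ about $X$ --- precisely the Berry--Esseen estimate already invoked in~\eqref{eq:berry} --- and verifying that it holds uniformly for both $\P_0$ and $\Q$ as $f \to 0$, using $\sigma_f \to 0$ and $\phi_f = O(f) \ll 1/\sqrt{k}$ to control the contribution of the sparse component $\P_1$.
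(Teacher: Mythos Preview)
Your proposal is correct and follows essentially the same approach as the paper: bound $F(p_0)$ from above via the integrator estimates (the paper sums~\eqref{eq:jpintegrator}), bound $F(p_0+C_1/\sqrt{k})$ from below via Assumption~A2 plus Berry--Esseen, calibrate $C$ between these two levels, and transfer to the empirical quantile by Chebyshev with variance $O(m/\sqrt{k})$ and gap $\Theta(m/\sqrt{k})$. The only cosmetic differences are that the paper fixes $C=(C_1'+C_1'')/2$ after determining the two CDF constants rather than choosing $C$ first, and that for the $\Q$ upper bound the paper invokes stochastic monotonicity of $\P_1$ versus $\P_0$ whereas you use $\Q\geq(1-\sigma_f)\P_0$; also, since $\hat p=\max\{\hat p_1,\hat p_2\}$, the lower bound $\hat p\geq p_0$ only requires controlling the $\P_0$ sample, so your treatment of the $\Q$ lower tail is slightly more than necessary but harmless.
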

\begin{proof}
	The argument is similar to that in the proof
	of 
	Theorem~\ref{thm:main2}.
	
	By summing over $\ell$ in~\eqref{eq:jpintegrator},
	$$
	\P_0[\theta/k \leq p_0] \leq \frac{C'_1}{\sqrt{k}},
	$$
	for some $C_1' > 0$.
	For any $C''_1 > C'_1$, there is $C_1 > 0$ such that
	\begin{eqnarray*}
		\P_0\left[\theta/k \leq p_0 + \frac{C_1}{\sqrt{k}}\right]
		&\geq& \P_0\left[\theta/k \leq p_0 + \frac{C_1}{\sqrt{k}}\,\bigg|\,
		X\in\left[p_0, p_0 + \frac{C_1}{\sqrt{k}}\right]\right]\\
		&& \qquad\qquad\qquad\qquad \times \,\P_0\left[
		X\in\left[p_0, p_0 + \frac{C_1}{\sqrt{k}}
		\right]\right]\\
		&\geq& \frac{1}{3}\frac{\rho\, C_1}{\sqrt{k}} \geq \frac{C_1''}{\sqrt{k}}
	\end{eqnarray*}
	by the Berry-Esseen theorem (as in~\eqref{eq:berry}),
	for $C_1$ large enough.
	
	Let 
	\begin{equation} \label{eq:defC}
	C = \frac{C'_1 + C''_1}{2}.
	\end{equation}
	Let $W$ be the number of genes (among $m$) such that
	$\theta/k \leq p_0$ and $w = \P_0[\theta/k\leq p_0]$.
	Repeating the calculations in the proof of 
	Theorem~\ref{thm:main2}, 
	$$
	\P_0^{\otimes m}\left[W \geq m \frac{C}{\sqrt{k}}\right]
	\leq \frac{4m w (1 - w) k}{m^2 (C- C_1')^2}
	= \frac{1}{m}O\left(\sqrt{k}\right)
	\leq \frac{1}{c'} O (f^2 k) \leq \frac{\delta}{8},
	$$
	for $c'$ large enough.
	Similarly,
	let $\tilde{W}$ be the number of genes such that
	$\theta/k \leq p_0 + C_1/\sqrt{k}$ and $\tilde{w} = \P_0[\theta/k\leq 
	p_0 + C_1/\sqrt{k}]$. Then
	$$
	\P_0^{\otimes m}\left[\tilde{W} \leq m \frac{C}{\sqrt{k}}\right]
	\leq \frac{\delta}{8}.
	$$
	That implies that with probability $1 - \delta/4$ the
	$C/\sqrt{k}$-quantile under $\P_0^{\otimes m}$
	lies in the interval $[p_0, p_0 + \frac{C_1}{\sqrt{k}}]$.
	By monotonicity, 
	$\P_1[\theta/k\leq 
	p_0 + C_1/\sqrt{k}] \geq \tilde{w}$,
	and we also have
	$$
	\Q^{\otimes m}\left[\tilde{W} \leq m \frac{C}{\sqrt{k}}\right]
	\leq \frac{\delta}{8},
	$$
	which implies the claim.
\end{proof}
\begin{claim}[Test]
	For $c'$ large enough, if $\tcal_1$ comes from
	$\P_0^{\otimes 2m}$, $\tcal_2$ comes from $\Q^{\otimes 2m}$ and~\eqref{eq:quantile} holds, then
	$$
	\hat{w}_1 < \hat{w}_2
	$$
	with probability $1 - \delta/2$, and vice versa.
\end{claim}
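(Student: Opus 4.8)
The plan is to exploit the data-splitting built into the algorithm: the threshold $\hat{p}$ is computed from the first sub-collections $\tcal_1^1,\tcal_2^1$, while $\hat{w}_1,\hat{w}_2$ are computed from the \emph{independent} sub-collections $\tcal_1^2,\tcal_2^2$. Hence, conditioning on the first halves (equivalently, on $\hat{p}$), the counts $m\hat{w}_1$ and $m\hat{w}_2$ are \emph{independent} binomials, with $m\hat{w}_1 \sim \mathrm{Bin}(m,w(\hat{p}))$ for $w(\hat{p}) = \P_0[\theta/k \leq \hat{p}]$ and $m\hat{w}_2 \sim \mathrm{Bin}(m,w'(\hat{p}))$ for $w'(\hat{p}) = \Q[\theta/k \leq \hat{p}]$. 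Working throughout on the event~\eqref{eq:quantile} that $\hat{p} \in [p_0, p_0 + C_1/\sqrt{k}]$, which holds with probability $1 - \delta/2$ by Claim~\ref{claim:algo1}, it then suffices to bound $\P[\hat{w}_1 \geq \hat{w}_2 \mid \hat{p}]$ uniformly over such $\hat{p}$ and integrate.

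First I would reproduce the gap estimate~\eqref{eq:gap} at the random threshold. Writing $w'(\hat{p}) - w(\hat{p}) = \sigma_f\left(\P_1[\theta/k \leq \hat{p}] - w(\hat{p})\right)$ and using $\hat{p} \geq p_0$ together with the Berry--Esseen bound $\P_1[\theta/k \leq p_0] \geq 1/3$ from~\eqref{eq:berry} and monotonicity of the threshold, I obtain $w'(\hat{p}) - w(\hat{p}) = \Omega(\sigma_f) = \Omega(f)$. For the upper bounds I would decompose $w(\hat{p}) = \P_0[\theta/k \leq p_0] + \P_0[p_0 < \theta/k \leq \hat{p}]$: the first term is $O(1/\sqrt{k})$ by summing~\eqref{eq:jpintegrator} over $\ell$ (as in~\eqref{eq:w}), while the second is a sum of~\eqref{eq:jintegrator} over the $O(\log C_1)$ border intervals $I_\ell$ that cover $[p_0, p_0 + C_1/\sqrt{k}]$, each contributing $O(2^\ell/\sqrt{k})$, so that $w(\hat{p}) = O(1/\sqrt{k})$ and, since $f\sqrt{k} = o(1)$, also $w'(\hat{p}) = O(1/\sqrt{k})$.

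Then I would finish by Chebyshev's inequality, exactly as in the proof of Theorem~\ref{thm:main2}. Conditionally on $\hat{p}$, the difference $\hat{w}_2 - \hat{w}_1$ has mean $w'(\hat{p}) - w(\hat{p}) = \Omega(f)$ and variance $\frac{w'(1-w')}{m} + \frac{w(1-w)}{m} = O(1/(m\sqrt{k}))$, whence
$$
\P[\hat{w}_1 \geq \hat{w}_2 \mid \hat{p}] \leq \frac{\var(\hat{w}_2 - \hat{w}_1 \mid \hat{p})}{(w'(\hat{p}) - w(\hat{p}))^2} = O\left(\frac{1}{m f^2 \sqrt{k}}\right) \leq \frac{\delta}{2},
$$
for $c'$ large enough, using $m \geq c'/(f^2\sqrt{k})$. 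Integrating over $\hat{p}$ on the event~\eqref{eq:quantile} gives the claim, and the ``vice versa'' direction follows verbatim after swapping the two datasets, since $\hat{p} = \max\{\hat{p}_1,\hat{p}_2\}$ and the entire argument are symmetric in $\tcal_1$ and $\tcal_2$.

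The main obstacle is not the Chebyshev step but ensuring that the gap and the $O(1/\sqrt{k})$ ceiling on $w,w'$ hold \emph{uniformly} over the random threshold $\hat{p} \in [p_0, p_0 + C_1/\sqrt{k}]$ rather than only at $p_0$. This uniformity is precisely what makes the two-phase split essential: it decouples the choice of $\hat{p}$ from the counts, letting me treat $\hat{p}$ as a fixed (worst-case) threshold when applying binomial concentration, so that the correlation between the empirical quantile and the counting statistic never enters the analysis.
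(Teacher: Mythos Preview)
Your proposal is correct and follows essentially the same approach as the paper. The paper's proof simply states that the argument is ``identical to that of Theorem~\ref{thm:main2}'' with $p_0$ replaced by $\hat{p}$, and with the bounds $w,w' = O(1/\sqrt{k})$ now obtained from Claim~\ref{claim:algo1} combined with~\eqref{eq:jpintegrator} and~\eqref{eq:jintegrator}; you have spelled out exactly this, including the data-splitting independence and the uniformity over $\hat{p}$, and your direct application of Chebyshev to $\hat{w}_2 - \hat{w}_1$ is an inessential variant of the paper's midpoint-threshold version.
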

\begin{proof}
	The proof is identical to that of Theorem~\ref{thm:main2}
	with $W$ now being the number of genes such that
	$\theta/k \leq \hat{p}$,
	$w = \P_0[\theta/k \leq \hat{p}]$, $w' = \Q[\theta/k \leq \hat{p}]$,
	and~\eqref{eq:w} and~\eqref{eq:wp2} now following 
	from Claim~\ref{claim:algo1} together with~\eqref{eq:jpintegrator} and~\eqref{eq:jintegrator}.
\end{proof}

\paragraph{Triplet reconstruction}
Consider again the three possible species trees
depicted in Figure~\ref{fig:3-species}.
By comparing the pairs two by two as described
in the agnostic algorithm, we can determine
which is the correct species tree topology.
Such ``triplet'' information is in general
enough (assuming the molecular clock hypothesis) 
to reconstruct a species tree
on any number of species (e.g.~\cite{SempleSteel:03}). We leave out the
details.


\section*{Acknowledgments}

We thank Gautam Dasarathy and Rob Nowak
for helpful discussions.

\bibliographystyle{alpha}
\bibliography{thesis,own,RECOMB12}

\end{document}